\begin{document}

\newtheorem{Integrator}{Integrator}

\theoremstyle{definition}
\newtheorem{Theorem}{Theorem}[section]
\newtheorem{Definition}{Definition}[section]
\newtheorem{Remark}{Remark}[section]
\newtheorem{Example}{Example}[section]
\newtheorem{Condition}{Condition}

\title{Accurate and Efficient Simulations of Hamiltonian Mechanical Systems with Discontinuous Potentials}

\author[1]{Molei Tao}
\author[2]{Shi Jin}
\affil[1]{School of Mathematics, Georgia Institute of Technology, Atlanta GA 30332, USA. Email: \url{mtao@gatech.edu}}
\affil[2]{School of Mathematical Sciences, Institute of Natural Sciences and MOE-LSE, Shanghai Jiao Tong University, Shanghai 200240, China.  Email: \url{shijin-m@sjtu.edu.cn}}

\maketitle

\centerline{\it  Dedicated to the centenary of the birth of Kang Feng}

\begin{abstract}
This article considers Hamiltonian mechanical systems with potential functions admitting jump discontinuities. The focus is on  accurate and efficient numerical approximations of their solutions, which will be defined via the laws of reflection and refraction. Despite of the success of symplectic integrators for smooth mechanical systems, their construction for the discontinuous ones is nontrivial, and numerical convergence order can be impaired too. Several rather-usable 
numerical methods are proposed, including: a first-order symplectic integrator for general problems, a third-order symplectic integrator for problems with only one linear interface, arbitrarily high-order reversible integrators for general problems (no longer symplectic), and an adaptive time-stepping version of the previous high-order method. Interestingly, whether symplecticity leads to favorable long time performance is no longer clear due to discontinuity, as traditional Hamiltonian backward error analysis does not apply any more. Therefore, at this stage, our recommended default method is the last one. Various numerical evidence, on the order of convergence, long time performance, momentum map conservation, and consistency with the computationally-expensive penalty method, are supplied. A complex problem, namely the Sauteed Mushroom, is also proposed and numerically investigated, for which multiple bifurcations between trapped and ergodic dynamics are observed.
\end{abstract}

\section{Introduction}
The developments of accurate and efficient integrators for simulating smooth Hamiltonian mechanical systems, as well as the associated theoretical analysis,  have been a major triumph of contemporary numerical analysis (see e.g.,  \cite{Hairer06, calvo1994numerical, feng2010symplectic, leimkuhler2004simulating, blanes2017concise}). Symplectic integrators (e.g., \cite{feng1986difference,sanz1992symplectic}), for instance, are a celebrated class of numerical methods suitable for such systems. For example, explicit symplectic integrators have been constructed, with arbitrarily high-order versions for both separable Hamiltonians (e.g., \cite{creutz1989higher, forest1989canonical, suzuki1990fractal, Yoshida:90}) and general, non-separable Hamiltonians \cite{Tao2016PRE}. Their explicitness and the ability to use relatively large timesteps lead to computationally efficient simulations (for stiff/multiscale problems, see also, e.g., \cite{Skeel:99, Sanz-Serna:08, SIM2, FLAVOR10}), and the high-orderness, which has mostly been achieved by a powerful technique known as the splitting method (e.g., \cite{McQu02} for a review), yields high accuracy at least for short-time simulations. Moreover, favorable long-time properties of symplectic integrators have also been proved, including linear growth of error (for integrable systems, e.g., \cite{calvo1995accurate, quispel1998volume}), near preservation of energy (e.g., \cite{benettin1994hamiltonian}), and conservation of momentum maps associated with symmetries (e.g., \cite{MaWe:01}). Central to many of these beautiful analyses is what is nowadays known as (Hamiltonian) backward error analysis (e.g., \cite{Hairer06}). It views the iterations of a symplectic integrator as stroboscopic samples of the solution of a near-by Hamiltonian system, which is to be found and hopefully close to the original Hamiltonian. In addition, also worth noting is another class of useful methods for structured continuous systems, namely reversible integrators. This class often overlaps with symplectic integrators, although they are not always the same, and for them one can also establish long term accuracy via (reversible) backward error analysis under reasonable assumptions (see Chap.XI of \cite{Hairer06}).

On the other hand, if the potential of a mechanical system has discontinuity, each corresponding to a potential barrier\footnote{Here we assume one is simply interested in a Newtonian problem (i.e., $H(q,p)=\|p\|^2/2+V(q)$), which is a special case of separable Hamiltonian problems defined via $H(q,p)=K(p)+V(q)$, where $K$ and $V$ are respectively referred to as the kinetic and the potential energy).}, most aforementioned results no longer hold. Even the sense in which one discusses the solution has to be defined, because the standard equations of motion known as the (canonical) Hamilton's equation (i.e. $\dot{q}=\partial H/\partial  p, \dot{p}=-\partial H/\partial  q$) is ill-defined due to indifferentiability of $V(q)$. Following \cite{Jin-Wen1, JinReview} (and also 
its higher-dimensional extension to curved interfaces \cite{Jin-Liao},  for Hamiltonian systems with discontinuous Hamiltonians \cite{JinWuHuang}, as well as an earlier such approach for well-balanced schemes for the shallow-water equations \cite{PerthameSimeoni}), we will define a solution based on physical principle, or more precisely, using the classical idea of particle refraction and reflection (used in optics and derived rigorously from Maxwell's equation \cite{jackson2007classical}). See Fig. \ref{fig_impactIllustration1D} for a simplified illustration and Fig. \ref{fig_impactIllustration} for a more general case.
This definition will be numerically shown (Sec.\ref{sec_numericsBenchmark} and \ref{sec_numericsModifiedKepler}) to be consistent with an alternative treatment termed as the penalty method (see Sec.\ref{sec:relatedWork} for a brief discussion of the penalty method), which was also observed in \cite{Jin-Wen2}, but with improved efficiency and accuracy.

\begin{figure}[h!]
\center
\includegraphics[width=0.5\textwidth]{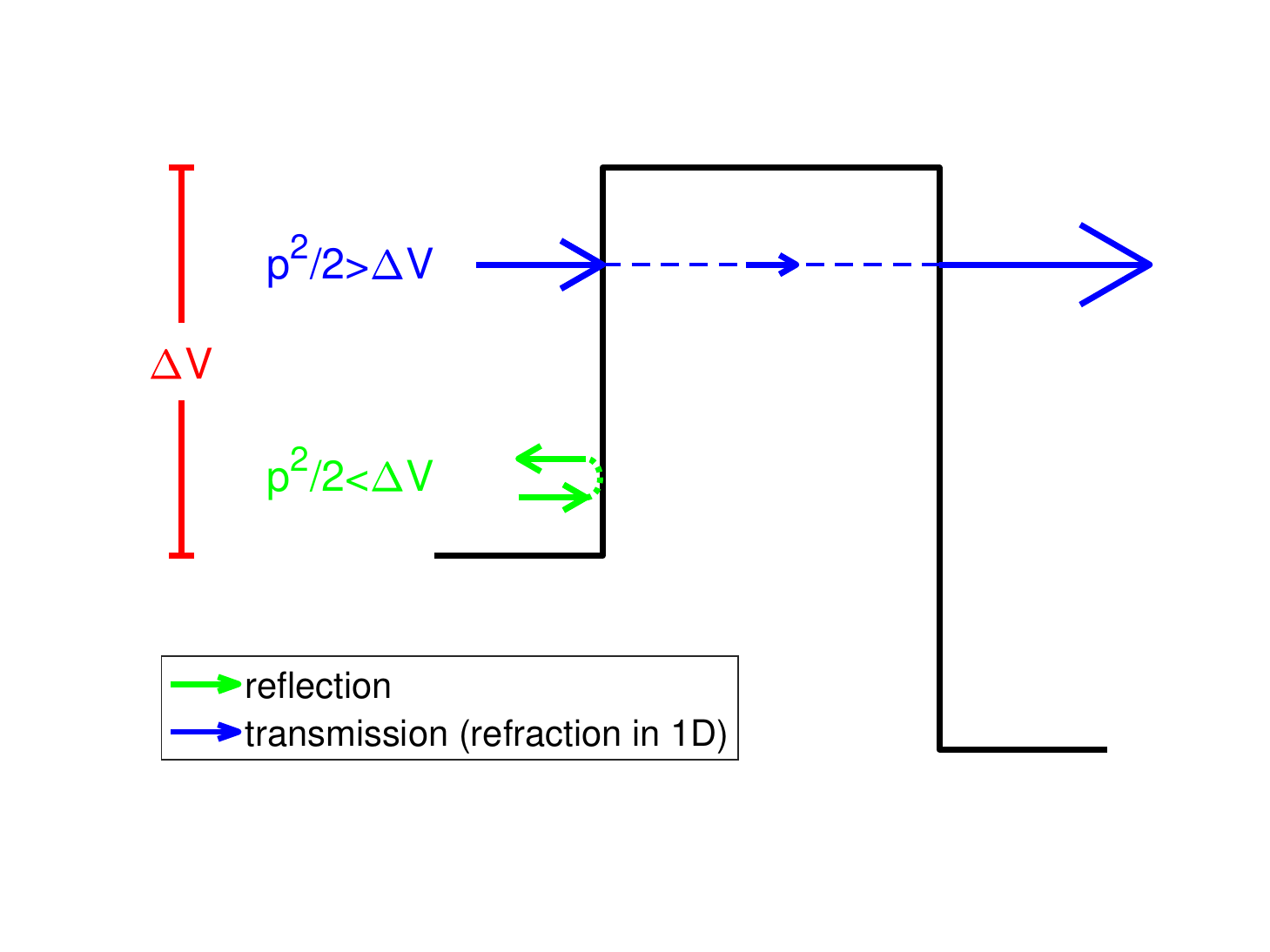}
\vspace{-20pt}
\caption{\small Toy 1D illustration of the dichotomy of \emph{reflection} and \emph{refraction}: whether the particle goes through a discontinuous barrier depends on if it has enough kinetic energy to overcome it. In 1D, this is a consequence of energy conservation. Arrow size indicates the magnitude of momentum.}
\label{fig_impactIllustration1D}
\end{figure}

In the previous works \cite{Jin-Wen1, JinWuHuang}, the proposed schemes were in general neither symplectic nor of high order accuracy. 
 In fact, since the so-defined  solution will exhibit discontinuity, in particular in the momentum variable, as a response to the discontinuity in the potential, these two discontinuities make the construction of a symplectic integrator, or even just a reversible integrator, nontrivial. Designing high-order methods becomes even more challenging, as the splitting approach for boosting the convergence order will be shown no longer effective due to the discontinuity. Moreover, backward error analysis, either the one for symplectic integrators or the one for reversible integrators, fails as well, and long time performance guarantees are not proved any more.

In order to improve the numerical simulation for such singular Hamiltonian systems, we propose four numerical methods, each specializing in certain tasks. See Table \ref{tab_methods}. Properties of these methods are numerically studied in Sec. \ref{sec_numerics}, such as convergence order (Sec. \ref{sec_numericsBenchmark}\&\ref{sec_numerics3rdOrder}), long time accuracy (Sec. \ref{sec_numericsBenchmark},\ref{sec_numerics3rdOrder},\ref{sec_numericsModifiedKepler}), and the conservation of momentum map due to symmetry (Sec. \ref{sec_numericsModifiedKepler}).

\begin{table}[h!]
{\centering
 \begin{tabular}{c|c|c|c|c} 
 \hline
	\textbf{section} & \textbf{symplectic?} & \textbf{reversible?$^\#$} & \textbf{global error$^*$} & \textbf{other feature(s)} \\
 \hline
	\ref{sec_1stOrderSympl}	& Yes	& Yes	& 1st-order & general \\
 \hline
	\ref{sec_3rdOrderSympl}	& Yes	& Yes	& 3rd-order & 1 linear interface$^\dagger$ only \\
 \hline
	\ref{sec_highOrderNonSympl} & No & Yes	& arbitrary & general \\
 \hline
	\ref{sec_highOrderNonSymplAdaptive} & No & Yes	& arbitrary & general; adaptive time-stepping \\
 \hline
 \end{tabular}
 }

 \#: the more precise question is whether the integrator can be made reversible.\\
 *: global error considered here is that of position, away from interface interceptions. \\
 $\dagger$: a linear interface is a co-dimension 1 hyperplane of discontinuities in the potential.
 \caption{A brief summary of numerical methods proposed in this article.}
 \label{tab_methods}
\end{table}


\subsection{Related work}
\label{sec:relatedWork}

\paragraph{`Nonsmooth mechanics'.} A rich field termed `nonsmooth mechanics' / `nonsmooth Hamiltonian systems' / `mechanical systems with hard constraints' / `unilaterally constrained systems' / `contact integrators' / `collision integrators' already exists (e.g., \cite{stratt1981constrained, mcneil1982new, heyes1982molecular, suh1990molecular, laursen1997design, kane1999finite, houndonougbo2000molecular, stewart2000rigid, laursen2002improved, pandolfi2002time, fetecau2003nonsmooth, cirak2005decomposition, bond2008stabilized, deuflhard2008contact, khenous2008mass, doyen2011time, pekarek2012variational, krause2012presentation, kaufman2012geometric, dharmaraja2012time,  leyendecker2012variational}). Problems considered there correspond to a special case of this study. 

More precisely, to the best of our knowledge, these  literature mainly consider, in the language of this article, a discontinuous potential barrier of height $+\infty$, so that trajectories can only stay within the finite potential region\footnote{Note these literature equivalently formulated the problem as there are a collection of unilateral holonomic constraints $f_i(q) \geq 0$.}. This setup is already very important in engineering and science applications; for example, in robotics, the interested mechanical object is often interacting with hard surfaces -- think about a bipedal robot that walks by frequently impacting the ground (e.g., \cite{grizzle2014models}), and in molecular and polymer dynamics molecules are sometimes viewed as hard spheres (so that they remain at least a certain distance from each other; e.g., \cite{dijkstra2002phase}). However, in these cases, the boundary manifold (i.e., the interface(s)) cannot be crossed, and one will only have \emph{reflection}s but never \emph{refraction}s. Therefore, those interactions with the interface were commonly called in the literature `contacts' and `collisions'.

On the contrary, the setup in \cite{Jin-Wen1, JinWuHuang}, which is adopted in this article,  allows finite discontinuous jumps, and therefore the full dichotomy of \emph{reflection} \textbf{and} \emph{refraction} can both manifest in the dynamics. 

In addition to this main difference, here are some more details on other aspects in which this research compares with the existing works in nonsmooth mechanics (all for $+\infty$ jump only): One objective of this article is to develop explicit symplectic integrators. Some pioneering breakthroughs developed symplectic integrators which are however implicit (e.g., \cite{fetecau2003nonsmooth, leyendecker2012variational}). An intriguing paper \cite{kaufman2012geometric} also noted that if one relaxes the symplecticity requirement and instead only requires symplecticity over smooth trajectories intervals, then it is possible to obtain better energy behaviors. This also reminds us of an inspiring and clever earlier work \cite{bond2008stabilized}, which uses backward error analysis for continuous systems to design stabilized event-driven collision integrator. In addition, there is a collection of substantial works based on finite element (e.g., \cite{kane1999finite, khenous2008mass, cirak2005decomposition, doyen2011time}). Although inevitably incapable of discussing all results in the rich `$+\infty$ jump' field, we also mention the relevant paper \cite{dharmaraja2012time} in the context of symplectic integrator. Its main goal is to simulate smooth potentials, but it approximates the smooth potential by a piecewise constant or quadratic function and uses analytically obtained solution of the nonsmooth approximation as a numerical solution.  What is new in this paper, in comparison, is we will have a continuous part of the potential in addition to this piecewisely-defined discontinuous part.

\paragraph{Penalty method / regularization.}
A popular idea commonly known as regularization/regularisation corresponds to modifying the discontinuous vector field of a differential equation and replacing its discontinuities by steep but continuous transitions. After the regularization both the equations of motion and the solution become well-defined. The hope is to recover the original dynamics as the steepness goes to infinity. This idea was proved to be very useful, for example, in engineering applications (e.g, \cite{gonthier2004regularized}). 
It should be pointed out, however, that regularization generally creates artificial numerical stiffness (see two paragraphs below), and caution should be exercised even without considering numerics, as regularization doesn't always guarantee a good, or even correct, approximation. In fact, general discontinuous problems may not even have unique solutions (e.g., \cite{filippov2013differential,dieci2009sliding}), and \cite{nordmark2011friction}, for example, provides a mathematical discussion on when regularization actually removes this ambiguity (we also refer to the notion of renormalized solution \cite{DiPernaLions, Ambrosio}, which is another way of removing ambiguity, \textbf{not} via regularization though). 
Also, regularization isn't always possible (e.g., \cite{makarenkov2012dynamics}). Furthermore, in
the case of geometric optics through an interface, an arbitrary smoothing of the interface could lead to incorrect (partial) transmission
and reflection rates \cite{Jin-Wen3}. Profound analyses exist and provide sufficient conditions for effective regularization (e.g., \cite{fridman2002slow, llibre2008sliding, llibre2007regularization, teixeira2012regularization, makarenkov2012dynamics}), however only for several subclasses of problems. 

The setup in this article is also just a subclass, because we only consider Newtonian mechanical systems and only the potential is discontinuous. In some sense this produces a higher-order singularity in the problem than what the previous paragraph discussed, as the forcing term in the `vector field' will become not just discontinuous but Dirac. It is natural to interpret regularization in this case to be the (sufficiently differentiable) regularization of the potential function instead of the vector field. This idea appeared, for example, in the `softening' of gravitational potential (e.g., \cite{aarseth1963dynamical}). Again, such regularization can work very well for specific scientific investigations, but its general validity is not warranted (e.g, \cite{gerber1996global} for an empirical example).

Focusing directly on the discontinuous problem, this article bares no ambition of investigating the general validity of regularized potentials, but only uses their numerical simulations as one of few available methods to compare to. We will simulate the regularized Hamiltonians using classical smooth symplectic integrators and use the numerical solutions as approximations of the discontinuous solutions. This approach will be called \emph{the penalty method} thereafter. The specific form of regularization used in our experiments is based on sigmoid function (e.g., eq.\ref{eq_regularizedHamiltonianNumerics1},\ref{eq_regularizedHamiltonianNumerics3}), and for these examples, the regularized dynamics do appear to be a good approximation of our definition of the exact (discontinuous) solution. We conjecture the accuracy of this approximation to be $\mathcal{O}(1/\alpha)+\mathcal{O}((\alpha h)^p)$, away from time points of interface crossing, if generic integrators are used (for some non-generic stiff integrators, see e.g, \cite{MR1436164,tao2016variational}, but their error bounds in this case are unclear yet). Here $\alpha$ is the steepest parameter that should go to $\infty$, and $h$ and $p$ are respectively the step size and order of the smooth integrator used in the penalty method. One can see that the artificial stiffness created by the regularization poses a strong constraint on the step size, which renders the penalty method computationally inefficient; such severe time-step constraints were already known (e.g., \cite{Jin-Wen2}). Our proposed discontinuous integrators, on the other hand, do not have this restriction.


\section{The definition of an exact solution}
\subsection{The setup of the problem}
Consider a Newtonian mechanical system in a $2d$-dimensional Euclidean phase space, whose potential function has jump discontinuities across interfaces. Assuming the location and size of each discontinuity is known, then the potential can be decomposed as the sum of a continuous part and a piecewise constant part. Assume also that the continuous part is two-times continuously differentiable. That is, formally, the system is governed by a Hamiltonian\footnote{Note the mass matrix has been assumed to be $I$, because other mass matrices can be equivalently turned into the identity via coordinate changes, which simply correspond to alternative $V$ and $U$; see, e.g., \cite{SoTa18} for a summary of how to transform the potentials.}
\begin{equation}
	H(q,p)=\frac{1}{2}p^T p+V(q)+U(q),
	\label{eq_canonicalHamiltonian}
\end{equation}
where $U(\cdot)$ is a $\mathcal{C}^2$ function, and $V(q)=V_i$ for some constant $V_i$ when $q\in D_i$. $D_i$'s for $i=1,\cdots,M$ are open sets whose closure form a partition of the configuration space $\mathbb{R}^d$. Let
\[
B_{ij}=\begin{cases}
		\overline{D_i} \cap \overline{D_j}, &\qquad i\neq j \\
		\emptyset,	&\qquad i=j
\end{cases} \]
denote the discontinuity interfaces and assume they are either empty or 1-codimensional $\mathcal{C}^1$ submanifolds.

\subsection{Exact solution via physical laws of reflection and refraction}
\label{sec_exactSln}
Due to the non-differentiability of $V(\cdot)$, Hamilton's equation can no longer be used to describe the (meta)particle's global motion. Nevertheless, one can turn to mechanical behavior of particles at the interface to define the solution, as proposed in 
\cite{Jin-Wen1, JinWuHuang, JinReview} (for curved interface in high dimension see \cite{Jin-Liao}): basically, in order for the solution to make sense physically, a corresponding particle should simply evolve locally according to the smooth Hamiltonian dynamics given by $\hat{H}=\frac{1}{2}p^T p+U(q)$, until it hits an interface, and then the particle will either reflect or refract instantaneously, depending on the normal momentum magnitude and whether the jump in $V$ corresponds to a potential barrier or dip across the interface. Then the particle evolves again locally in some $D_i$ according to $\hat{H}$, until the next interface hitting.

More precisely, under nontrivial but not too restrictive assumptions (see Conditions \ref{asmpt_oneInterface} and \ref{asmpt_noInterfaceSliding}), the solution will be well-defined as an alternation between two phases, \emph{flow} and \emph{impact}, which will now be detailed.
To do so, denote by $\phi^t$ the time-$t$-flow map of $\hat{H}$, and by $\mathcal{Q}$ the operator that projects $[q,p]$ to the $q$-component. Let $t_0$ be the initial time of evolution, and let $i_{t_0}$ be the integer such that the initial condition satisfies $q(t_0)\in \overline{D_{i_{t_0}}}$ (note: if the initial condition is on an interface, $i_{t_0}$ is not unique, and its choice needs to be specified as part of the initial condition). The next hitting time is defined to be
\[
	t_{k+1}=t_k+\min_{j=1,\cdots,M} \inf \left\{ \delta \,\Big\rvert\, \delta > 0, \mathcal{Q}\circ\phi^\delta[q(t_k),p(t_k)] \in B_{i_{t_0}j} \right\},
\]
let
\[
	i_{t_{k+1}}=\text{argmin}_{j=1,\cdots,M} \inf \left\{ \delta \,\Big\rvert\, \delta > 0, \mathcal{Q}\circ\phi^\delta[q(t_k),p(t_k)] \in B_{i_{t_0}j} \right\},
\]
and let the solution on time interval $[t_k, t_{k+1}]$ be defined by
\[
	[q(t_k+\delta),p(t_k+\delta)]=\phi^\delta[q(t_k),p(t_k)],	\qquad 0 \leq \delta \leq t_{k+1}-t_k.
\]
This gives one of the two phases of the solution, which shall be called \emph{flow}.

\begin{Remark}
	It is easy to see $\emph{flow}$ preserves both (i) the differentiable part of the total energy, $\hat{H}$, and (ii) the total energy $H$ as long as time points $t_k$ and $t_{k+1}$ are not considered (on which $V$ is ill-defined).
\end{Remark}

After \emph{flow}, the other phase, called \emph{impact}, will take place (unless $t_{k+1}=\infty$).

To define \emph{impact}, let $\hat{n}$ be the unit normal vector of the interface $B_{i_{t_k}i_{t_{k+1}}}$ at $q(t_{k+1})$, in the direction of from $D_{i_{t_k}}$ to $D_{i_{t_{k+1}}}$. Decompose the pre-impact momentum as $p(t_{k+1})=p^t_{k+1}+p^n_{k+1}$, where $p^n_{k+1}=(p(t_{k+1})\cdot \hat{n})\hat{n}$ is the projection onto the normal direction. Let $\Delta V=V_{t_{k+1}}-V_{t_k}$.

If the particle has enough normal momentum to transmit through the interface, i.e.,
\[
	\| p^n_{k+1} \|^2/2 \geq \Delta V,
\]
a \emph{refraction} will happen in the sense that the post-impact normal momentum will be reduced, and the value of $p(t_{k+1})$ will be overwritten by conservation of energy and tangential momentum:
\begin{equation}
	p(t_{k+1}) = p^t_{k+1} + \sqrt{\|p_{k+1}^n\|^2-2\Delta V} \hat{n}.
	\label{eq_refraction}
\end{equation}
If there is not enough normal momentum on the other hand, i.e.,
\[
	\| p^n_{k+1} \|^2/2 < \Delta V,
\]
a \emph{reflection} will take place in the sense that the post-impact normal momentum will simply change its sign, and the value of $p(t_{k+1})$ will be overwritten by
\begin{equation}
	p(t_{k+1}) = p^t_{k+1} - p^n_{k+1}.
	\label{eq_reflection}
\end{equation}
However, in both cases, the value of the position, $q(t_{k+1})$, will remain unchanged. An illustration of \emph{refraction} and \emph{reflection} is provided in Figure \ref{fig_impactIllustration}.

\begin{figure}[h!]
\center
\includegraphics[width=0.5\textwidth]{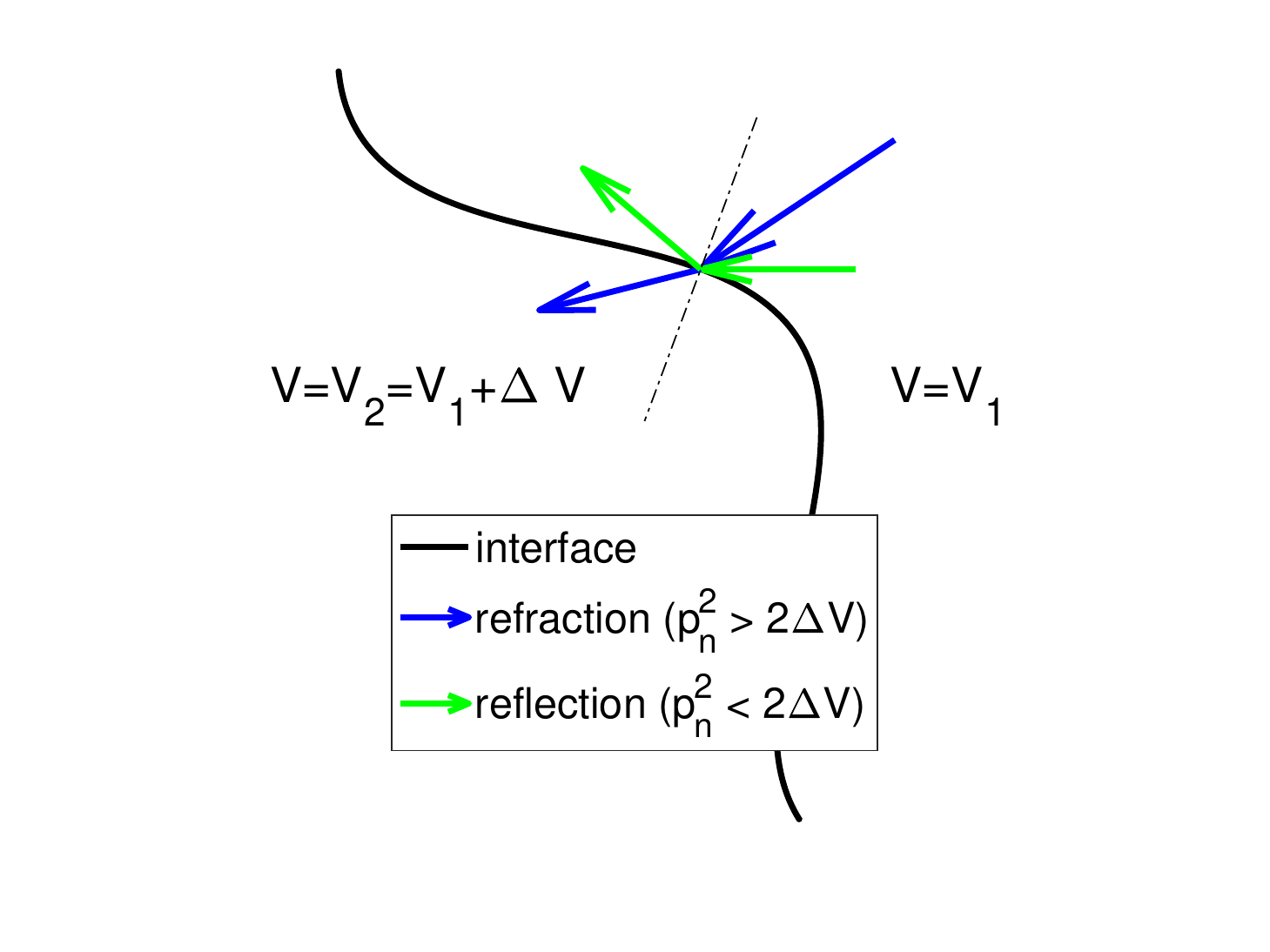}
\vspace{-20pt}
\caption{\small Two ways in which an \emph{impact} can change the momentum, 2D illustration.}
\label{fig_impactIllustration}
\end{figure}

After \emph{impact}, which is instantaneous in time (at $t_{k+1}$), the solution will be continued by \emph{flow} again, and these two types of behaviors alternate.

\begin{Remark}
	Unlike \emph{flow} which produces continuous trajectories, \emph{impact} creates discontinuities in $p$ ($q$ is still continuous).
\end{Remark}

\begin{Remark}
	It is not very meaningful to state if \emph{impact} conserves the total energy, because it is an instantaneous change of momentum, and at the impact, the position $q(t_{k+1})$ is on an interface where $V(\cdot)$ is undefined. However, if one considers the composition of an infinitesimal-time pre-impact flow, the impact, and another infinitesimal-time post-impact flow, then the composed map conserves the total energy $H$.
\end{Remark}

Important to mention is, the above definition of the exact solution requires two conditions, namely:

\begin{Condition}
	In the interested time horizon, the interface hitting position of the solution, $q(t_{k+1})$, only belongs to one interface $B_{i_{t_k}i_{t_{k+1}}}$ for each $k$. For example, the rare situation of an \emph{impact} at the intersection of three pieces illustrated in Fig. \ref{fig_impactDisallowed1} is assumed to never happen, because in this case there is no unique way of defining the post-impact momentum.
	\label{asmpt_oneInterface}
\end{Condition}

\begin{figure}[h!]
\center
\includegraphics[width=0.5\textwidth]{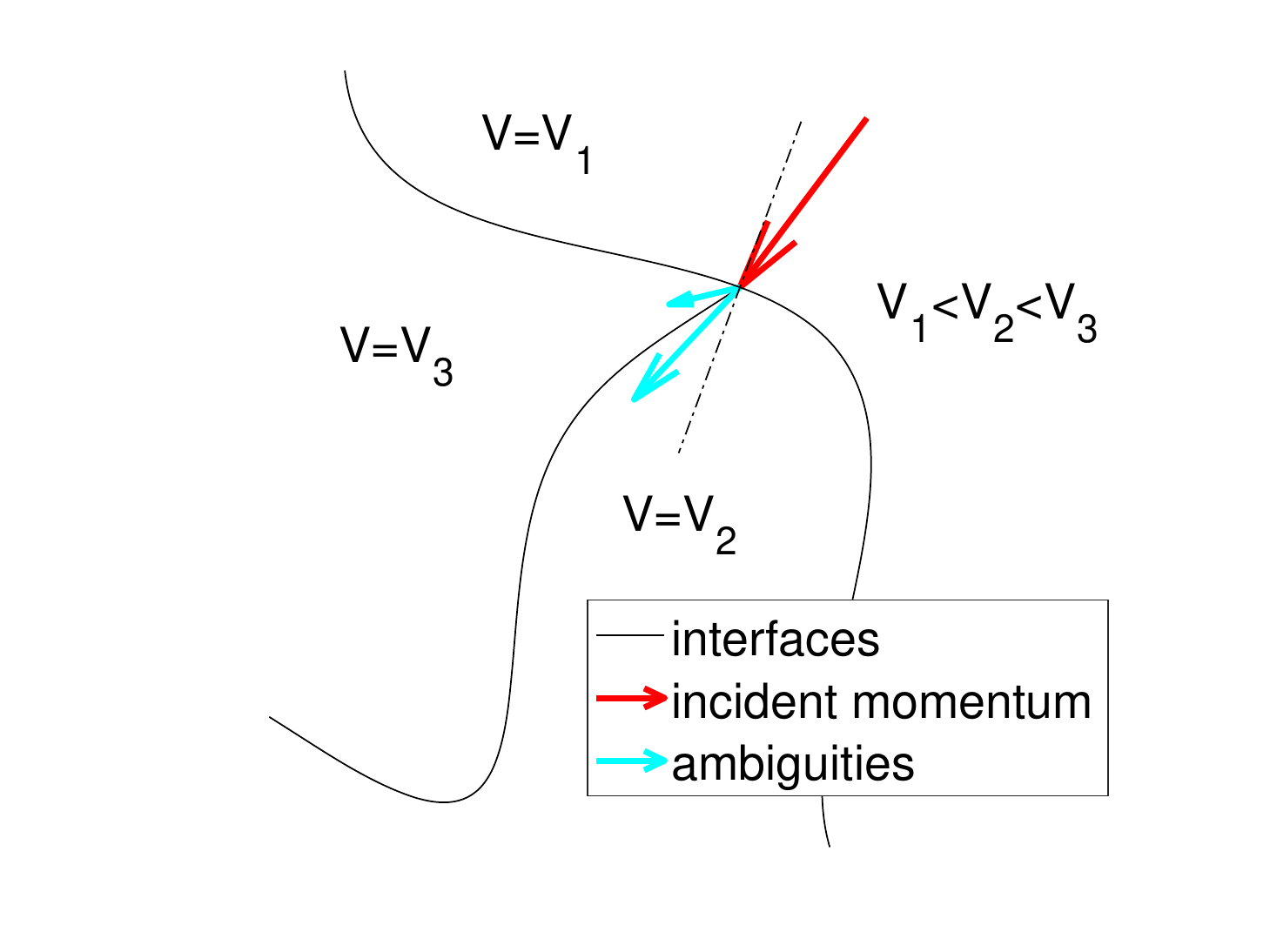}
\vspace{-20pt}
\caption{\small Ambiguity in post-impact momentum due to multiple-interface intersection.}
\label{fig_impactDisallowed1}
\end{figure}

\begin{Condition}
	For any $t$ in the interested time horizon such that the aforedefined $q(t)$ belongs to some interface $B_{ij}$, we have $p(t) \notin T_{q(t)}B_{ij}$. That is, sliding along an interface for a nonzero amount of time is assumed to never happen.
	\label{asmpt_noInterfaceSliding}
\end{Condition}

\begin{Remark}
	Because the discontinuity in our problem is in the scalar-valued potential $V(\cdot)$ instead in the vector field, we do not face challenges such as sliding motion in Filippov systems (e.g., \cite{filippov2013differential,leine2013dynamics,dieci2009sliding}), and defining a solution is easier.
\end{Remark}

If Cond.\ref{asmpt_noInterfaceSliding} is not satisfied, i.e., sliding along an interface occurs, one needs to use Geometrical Theory of Diffraction; see \cite{Jin-Yin}.

It is unclear, however, how to relax Condition \ref{asmpt_oneInterface} in a deterministic way. We feel that how to define a unique deterministic solution when Condition \ref{asmpt_oneInterface} fails will be problem dependent and requiring additional information about how the problem is set up. On the other hand, it is possible to define a stochastic solution by mimicking quantum mechanics; this is beyond the scope of the current work.

\subsubsection{Analytical solution for the quadratic case}
\label{sec_quadraticPotentialSln}
When the local Hamiltonian $\hat{H}=\frac{1}{2}p^T p+U(q)$ is integrable, the exact flow of the full problem $H=\hat{H}+V(q)$ is obtainable. As an example, this subsection will consider the case where $U$ is quadratic, and its exact solution will be used later for two purposes: (i) as part of a numerical algorithm (Section \ref{sec_3rdOrderSympl}), and (ii) as a benchmark for assessing numerical accuracy (Section \ref{sec_numericsBenchmark}).

For simplicity, consider one degree-of-freedom problems. Assume without loss of generality that $V$ corresponds to only 1 interface, i.e.,
\begin{equation}
	U(q)=\omega^2 (q-q_\text{off})^2/2,	\qquad 
	V(q) = \begin{cases}
			\Delta V, \qquad & q>q_\text{jump} \\
			0, \qquad & q<q_\text{jump} \\
			\text{undefined}, \qquad & q=q_\text{jump}
		   \end{cases}
	\label{eq_quadraticSolvableSetup}
\end{equation}
where $\omega, q_\text{off}, q_\text{jump}, \Delta V$ are constant scalar parameters.

Let $q,p$ denote the current position and momentum, and let $Q,P$ denote those after time $h$. Assume $h$ is small enough such that the interface is encountered at most once in time $h$ (if $h$ is large, break it into smaller time lapses and iterate the following). $Q,P$ can be obtained in the following way:

\noindent\hrulefill
\begin{algorithm}[H]
   compute a position proposal: $\tilde{q}=q_\text{off}+\cos(\omega h)(q-q_\text{off})+\sin(\omega h) p/\omega$, which corresponds to the new position when no interface crossing happens \;
   \eIf {$(q-q_\text{jump})(\tilde{q}-q_\text{jump})<0$, \emph{i.e., interface is crossed, }} {
   	compute $\hat{p}$ the pre-\emph{impact} momentum: $\hat{p}=\sigma \sqrt{\omega^2 (q-q_\text{off})^2+p^2-\omega^2 (q_\text{jump}-q_\text{off})^2}$, where $\sigma=-1$ if $q-q_{jump}>0$ and $\sigma=1$ if $q-q_{jump}<0$ \;
   	compute $t$ the time to \emph{impact}: let $t_1=\text{atan2}(\hat{p}/\omega, q_\text{jump}-q_\text{off}) - \text{atan2}(p/\omega, q-q_\text{off})$, $t_2=\begin{cases} t_1 &\text{if }t_1\geq 0 \\ t_1+2\pi &\text{if } t_1<0 \end{cases}$, and $t=(2\pi-t_2)/\omega$ \;
   	use post-\emph{impact} position $\bar{q}=q_\text{jump}$ and compute post-\emph{impact} momentum $\bar{p}$: \\
   	\eIf {$q-q_\text{jump}<0$, \emph{i.e. crossing is from left to right, }} { 
   		\eIf {$\omega^2 (q-q_\text{off})^2 + p^2 > 2\Delta V + \omega^2 (q_\text{jump}-q_\text{off})^2$ } {
   			$\bar{p}=\sqrt{\omega^2 (q-q_\text{off})^2 + p^2 - 2\Delta V - \omega^2 (q_\text{jump}-q_\text{off})^2}$, i.e., \emph{refraction} \;
   		} {
   			$\bar{p}=-\sqrt{\omega^2 (q-q_\text{off})^2 + p^2 - \omega^2 (q_\text{jump}-q_\text{off})^2}$, i.e., \emph{reflection} \;
   		}
   	}{	
   		\eIf {$\omega^2 (q-q_\text{off})^2 + p^2 > -2\Delta V + \omega^2 (q_\text{jump}-q_\text{off})^2$ } {
   			$\bar{p}=-\sqrt{\omega^2 (q-q_\text{off})^2 + p^2 + 2\Delta V - \omega^2 (q_\text{jump}-q_\text{off})^2}$, i.e., \emph{refraction} \;
   		} {
   			$\bar{p}=\sqrt{\omega^2 (q-q_\text{off})^2 + p^2 - \omega^2 (q_\text{jump}-q_\text{off})^2}$, i.e., \emph{reflection} \;
   		}
   	}
   	$Q = q_\text{off}+\cos(\omega (h-t))(\bar{q}-q_\text{off})+\sin(\omega (h-t))\bar{p}/\omega$, \\
   	$P=-\omega\sin(\omega (h-t))(\bar{q}-q_\text{off})+\cos(\omega (h-t)) \bar{p}$, \quad i.e., \emph{flow} $t$-time after \emph{impact}.
   }{
   	$Q = q_\text{off}+\cos(\omega h)(q-q_\text{off})+\sin(\omega h) p/\omega$, \\
   	$P = -\omega\sin(\omega h)(q-q_\text{off})+\cos(\omega h) p$, \quad i.e., no \emph{impact}, \emph{flow} $h$-time only\;
   }
\end{algorithm}
\noindent\hrulefill

\section{The numerical methods}
\subsection{A first-order in position time-reversible symplectic integrator}
\label{sec_1stOrderSympl}
A symplectic integrator for \eqref{eq_canonicalHamiltonian} can be constructed via the approach of Hamiltonian splitting and composition \cite{Hairer06}.

More precisely, denote by $\phi_1^\delta$ the $\delta$-time flow of the Hamiltonian $H_1=U(q)$, and by $\phi_2^\delta$ the $\delta$-time flow of $H_2=\frac{1}{2}p^Tp + V(q)$. Although the exact flow of $H$, $\phi^\delta$, is generally not numerically obtainable, the actions of $\phi_1^\delta$ and $\phi_2^\delta$ can be exactly obtained in explicit forms. Then appropriate compositions of $\phi_1$ and $\phi_2$ (e.g., Integrator \ref{int_1stOrderSymplectic}) will provide symplectic approximations of $\phi$ with vanishing error as $\delta\rightarrow 0$.

\smallskip

More specifically, $\phi_1$ is easy to evaluate:
\begin{equation}
	\phi_1^\delta: [q,p] \mapsto [q,p-\delta \nabla U(q)]
	\label{eq_phi1}
\end{equation}
On the other hand, $\phi_2^\delta [q,p]$ can be obtained by evolving the exact flow of $H_2$ for $\delta$-time. Section \ref{sec_exactSln} described how to do so by alternating \emph{flow} and \emph{impact} phases. In fact, $\phi_2$ is analytically computable, because $H_2$ does not contain the nonlinear potential $U(\cdot)$, and each \emph{flow} phase simply corresponds to free drift in a straight line.

Therefore, the only nontrivial parts for evaluating $\phi_2$ are (i) to compute, given $q,p$ vectors, how much time a particle at position $q$ with momentum (same as velocity) $p$ first hits one of the known interfaces, and (ii) to alter the momentum afterwards. How to compute the first hitting time depends on how the interfaces is provided in the problem setup. 

\paragraph{The demonstrative case in which the interface geometry is analytically known.} In this case, we can always find an affine transformation of $q$ and an associated linear transformation of $p$ (for making the transformation of both $q$ and $p$ canonical), such that $p$ is rotated to align with the $x$-axis, $q$ is on the $x$-axis, and the relevant interface\footnote{Recall: when $\delta$ is small enough, there will be at most one interface encountered.} passes through the origin; denote the unit tangent vector of the interface at origin by $\hat{t}$.

Then $[Q,P]=\phi_2^\delta[q,p]$ will be given by the following steps: first, let 
\begin{equation}
	\tau=-q/p,
	\label{eq_phi2hittingTime}
\end{equation}
where the division is understood as a ratio between their $x$-components; $\tau$ is the time to hit the interface. If $\tau>\delta$ or $\tau<0$, i.e., no \emph{impact} within this step, then let
\begin{equation}
	Q=q+\delta p, \qquad P=p
	\label{eq_phi2noImpact}
\end{equation}
and $\phi_2$ computation is completed; otherwise, let
\begin{equation}
	q_a=0,	\qquad p_a=p
	\label{eq_phi2preImpact}
\end{equation}
be the result of the pre-impact \emph{flow}, let
\[
	p_{at}=(p_a\cdot \hat{t})\hat{t}, \quad \text{and}\quad p_{an}=p_a-p_{at}
\]
be the tangential and normal components of the incident momentum at the interface, denote by $\Delta V=(V(0^-)-V(0^+))\text{sgn}(\hat{x}\cdot q)$ the potential jump across the interface, let
\begin{equation}
	q_b=0, \qquad p_b=\begin{cases} p_{at}-p_{an}	& \qquad \|p_{an}\|^2/2 < \Delta V \\
									p_{at}+\frac{p_{an}}{\|p_{an}\|} \sqrt{\|p_{an}\|^2-2\Delta V} &\qquad \|p_{an}\|^2/2 > \Delta V
					  \end{cases}
	\label{eq_phi2impact}
\end{equation}
be the result of \emph{impact} (the first case is \emph{reflection} and the second \emph{refraction}), and let
\begin{equation}
	Q=(\delta-\tau)p_b,	\qquad P=p_b
	\label{eq_phi2postImpact}
\end{equation}
be the result of the post-impact \emph{flow}. The result of $\phi_2^\delta$ will be $Q,P$.

It is not difficult to see that the same calculation works in general coordinate systems as long as the interface geometries are simple enough such that the first hitting time $\tau$ is a computable function of $q$ and $p$.

When the interface geometry is too complex to be explicitly and analytically characterized, $\tau$ can be numerically computed to machine precision rapidly. See Section \ref{sec_timeToImpact} for details when interfaces are provided by either (i) level sets of $\mathcal{C}^1$ functions, or (ii) discontinuous $V(\cdot)$ values.

\paragraph{Two additional comments.}
Necessary to further clarify is, in order to have guaranteed accuracy of our numerical methods, $\delta$ needs to be sufficiently small, not only for controlling the error of the composition, but also for ensuring there is at most one \emph{impact} per $\delta$-sized time step. We will also describe a numerical method that can account for multiple \emph{impacts} per step; however, whether all \emph{impacts} within a step can be detected depends on how interfaces are provided. If the numerical algorithms in Section \ref{sec_timeToImpact} need to be used, there is no guarantee on the capture of the earliest \emph{impact} if $\delta$ is too large.

It is also notable that although the above description of $\phi_2^\delta$ evaluation might appear `discrete', it is \textbf{exact}. In practice, of course, its accuracy will be limited by the machine precision. It is possible to algorithmically alleviate some limitation of machine precision and we refer to the innovative idea in \cite{higham1993accuracy}, but in this article we will equate `accurate to machine precision' with `exact'.

\bigskip

\noindent
As the explicit evaluations of $\phi_1$ and $\phi_2$ are obtained, a numerical integrator can be constructed by composing these maps. The method proposed in this section is a one-step method that uses a constant step size of $\delta$ and a one-step update given by
\begin{Integrator}[1st-order in position, time-reversible, symplectic]
Use one step update
\[
	[q_{n+1},p_{n+1}] = \phi_1^{\delta/2} \circ \phi_2^\delta \circ \phi_1^{\delta/2} [q_n,p_n],
\]
where $\phi_1$ is given by \eqref{eq_phi1} and $\phi_2$ is given by (\ref{eq_phi2hittingTime}--\ref{eq_phi2postImpact}).
	\label{int_1stOrderSymplectic}
\end{Integrator}

\paragraph{Symplecticity.}
Obviously $\phi_1^\delta$ defined in \eqref{eq_phi1} is a symplectic map. If $\phi_2^\delta$ is also symplectic as it should be (since it is the exact flow of some Hamiltonian system), then Integrator \ref{int_1stOrderSymplectic} is symplectic because it is the composition of symplectic maps. And indeed $\phi_2^\delta$ is symplectic. In fact, both the `no impact' case \eqref{eq_phi2noImpact} and the \emph{reflection}/\emph{refraction} case correspond to symplectic maps. The former is obvious, and for the latter we have:

\begin{Theorem} Under Conditions \ref{asmpt_oneInterface} and \ref{asmpt_noInterfaceSliding}, the \emph{reflection}/\emph{refraction} case corresponds to symplectic $\phi_2^\delta$.
\end{Theorem}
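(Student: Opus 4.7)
The plan is to decompose $\phi_2^\delta$ into three elementary pieces and verify that the composition is symplectic at the Jacobian level. For $\delta$ sufficiently small that at most one impact occurs per step, the no-impact case is the pure free-particle flow $[q,p]\mapsto[q+\delta p,p]$, which is obviously symplectic. The nontrivial case factors as
\[
\phi_2^\delta \;=\; \bigl([q,p]\mapsto[q+(\delta-\tau)p,p]\bigr)\;\circ\;\mathcal{I}\;\circ\;\bigl([q,p]\mapsto[q+\tau p,p]\bigr),
\]
where $\tau=\tau(q,p)$ is the hitting time implicitly defined by $f(q+\tau p)=0$ for the local interface $\{f=0\}$, and the impact map $\mathcal{I}$ keeps $q$ and the tangential momentum fixed while sending the normal component $p_n=p\cdot\hat{n}^*$ (with $\hat{n}^*=\hat{n}(q+\tau p)$) to $-p_n$ (reflection) or $\sigma\sqrt{p_n^2-2\Delta V}$ (refraction). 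Condition \ref{asmpt_noInterfaceSliding} gives $\hat{n}^*\cdot p\neq 0$, so $\tau$ is smooth by the implicit function theorem.

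The first step I would carry out is a warm-up in which the interface is \emph{locally flat} near $q^*$, so that $\hat{n}^*$ is constant and $\tau=-q_n/p_n$ in normal/tangent coordinates. After an orthogonal canonical change of variables aligning $\hat{n}^*$ with $e_1$, the tangential displacements produced by the pre- and post-impact flows telescope to $Q_t=q_t+\delta p_t$ and $P_t=p_t$, independent of normal coordinates. The Jacobian therefore block-diagonalizes into a manifestly symplectic tangential block and a $2\times 2$ normal block $(q_n,p_n)\mapsto\bigl((\delta+q_n/p_n)f(p_n),\,f(p_n)\bigr)$, whose determinant evaluates to $f(p_n)f'(p_n)/p_n=1$ thanks to the identity $f(p_n)f'(p_n)=p_n$ (immediate for reflection, and obtained by differentiating the energy relation $f(p_n)^2=p_n^2-2\Delta V$ for refraction). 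Since in two dimensions unit determinant is equivalent to symplecticity, the whole map is symplectic in the flat case.

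The main obstacle is extending this to a curved interface, where $\hat{n}^*(q,p)=\hat{n}(q+\tau(q,p)p)$ depends on $(q,p)$ through $q^*$ and its first variation carries the second fundamental form of the interface. Writing $P=p+\gamma\hat{n}^*$ with $\gamma=f(p_n)-p_n$ and expanding $dP\wedge dQ$ produces additional terms proportional to $d\hat{n}^*$ that a priori do not cancel. To show they do, I would combine (i) the constraint identity $\hat{n}^*\cdot(dq+\tau dp+p\,d\tau)=0$ obtained by differentiating $f(q^*)=0$, (ii) the fact that $P-p$ is parallel to $\hat{n}^*$, which collapses $\sum_i dP_i\wedge d\hat{n}^*_i$ onto a form that pairs with (i), and (iii) the differentiated energy identity $P\cdot dP=p\cdot dp$. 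Careful bookkeeping should make all curvature contributions vanish.

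Should the direct computation prove unwieldy, my fallback is a regularization argument: approximate $V$ by a smooth sigmoidal $V_\epsilon\to V$; the flow $\phi_{2,\epsilon}^\delta$ of $H_\epsilon=\tfrac{1}{2}\|p\|^2+V_\epsilon$ is a smooth Hamiltonian flow, hence symplectic; under Conditions \ref{asmpt_oneInterface}--\ref{asmpt_noInterfaceSliding} it converges to $\phi_2^\delta$ locally in $C^1$ as $\epsilon\to 0$, since transversality prevents grazing degeneracies; and symplecticity is preserved under $C^1$-limits of diffeomorphisms.
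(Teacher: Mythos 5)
Your main argument is, at its core, the same as the paper's: the paper proves the theorem by substituting \eqref{eq_phi2hittingTime}, \eqref{eq_phi2preImpact}, \eqref{eq_phi2impact} into \eqref{eq_phi2postImpact} and verifying $J^T\Omega J=\Omega$ by direct computation in the adapted coordinates where the interface passes through the origin with a \emph{fixed} tangent $\hat{t}$ at the hitting point --- which is precisely your ``locally flat'' warm-up. Your packaging is cleaner and more informative than the paper's ``one can verify'': the telescoping $Q_t=q_t+\delta p_t$, $P_t=p_t$ of the tangential variables, the resulting block-diagonal Jacobian, and the identity $f(p_n)f'(p_n)=p_n$ (trivial for reflection, the differentiated energy relation for refraction) reduce everything to a unit determinant of a single $2\times 2$ normal block, and they make transparent why the two branches of \eqref{eq_phi2impact} behave identically. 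Two caveats. First, the curved-interface bookkeeping that you identify as the main obstacle is a genuine issue, but note that the paper does not carry it out either: its proof is stated for the map (\ref{eq_phi2hittingTime}--\ref{eq_phi2postImpact}) with $\hat{t}$ frozen at the actual impact point, so your completed warm-up already matches what the paper actually proves; your items (i)--(iii) would be needed for a genuinely coordinate-free statement, and you have not closed that computation. Second, the regularization fallback should not be relied upon: the paper itself stresses (Section \ref{sec:relatedWork}) that smoothing the potential need not reproduce the discontinuous dynamics, and the local $C^1$ convergence of $\phi_{2,\epsilon}^\delta$ to $\phi_2^\delta$ that your argument needs is exactly the kind of statement the paper only conjectures (with an $\mathcal{O}(1/\alpha)$ rate) and supports numerically; moreover symplecticity survives only under $C^1$ convergence of the maps, not $C^0$, so the delicate part of that route is precisely the part left unproved.
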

\begin{proof}
Substituting \eqref{eq_phi2hittingTime}, \eqref{eq_phi2preImpact}, \eqref{eq_phi2impact} into \eqref{eq_phi2postImpact} and computing the Jacobian $J=d[Q,P]/d[q,p]$, one can verify that $J^T \Omega J=\Omega$ where $\Omega=\begin{bmatrix} 0 & I \\ -I & 0 \end{bmatrix}$ for each case of \eqref{eq_phi2impact}. This shows the preservation of the canonical symplectic 2-form in vector space.
\end{proof}

\begin{Remark}
    One may worry that when transforming an infinitesimal phase space volume by $\phi_2^\delta$, part of it undergoes \emph{reflection} and another part undergoes \emph{refraction}, which would challenge the above case-by-case demonstration of symplecticity. However, this possibility is ruled out by Condition \ref{asmpt_noInterfaceSliding}, because the transition between \emph{reflection} and \emph{refraction} corresponds to sliding along the interface.
\end{Remark}

Worth commenting, however, is that neither of the three submaps of $\phi_2$, namely pre-impact \emph{flow} \eqref{eq_phi2preImpact}, \emph{impact} \eqref{eq_phi2impact}, or post-impact \emph{flow} \eqref{eq_phi2postImpact}, is symplectic (simple algebra will show $J^T\Omega J \neq \Omega$). The intuition is, \eqref{eq_phi2preImpact} and \eqref{eq_phi2postImpact} are drifts, but unlike simple drifts over constant time which are symplectic, they have additional $q,p$ dependence through $\tau$ (see \eqref{eq_phi2hittingTime}), which breaks the symplecticity; for \eqref{eq_phi2impact}, both the reflection and refraction cases rescale one component of the momentum without changing anything else, and thus cannot be symplectic. Therefore, it is a nontrivial fact that {\it their composition, $\phi_2^\delta$, resumes to be symplectic}.

What will the symplecticity of the proposed integrator imply about its accuracy in numerical simulations? We do not yet have a theory. Traditionally, the favorable long time performances of symplectic integrators are supported by elegant theoretical guarantees such as backward error analysis (e.g., \cite{moser1968lectures, hairer1994backward, benettin1994hamiltonian, Hairer06}), linear error growth for integrable systems (e.g., \cite{calvo1993development,calvo1995accurate,quispel1998volume,Hairer06}), and near-preservation of adiabatic invariants \cite{hairer2000long, MR2275175, Hairer06}. However, none of them can be directly applied to the discontinuous Hamiltonian problem, mainly due to a lack of differentiability in both the Hamiltonian and the $p$ trajectory.
However, symplectic integrators proposed in this article were still observed to exhibit pleasant long time accuracy in numerical experiments (see Section \ref{sec_numerics}), and this is intuitive because symplecticity would still be desired at least in \emph{flow} phases, in which the numerical solution only fluctuates the energy instead of drifting it as many nonsymplectic methods may do, and for most of the time the particle is in \emph{flow} phases.

We imagine that a proof of long time accuracy might require a discontinuous generalization of canonical perturbation theory, which is beyond the scope of this article.

\paragraph{Order of accuracy, time-reversibility, and higher-order splitting schemes} Since $H=H_1+H_2$, Integrator \ref{int_1stOrderSymplectic} can be recognized as a Strang splitting method \cite{Strang:68}. In the traditional (smooth) theory this would suggest that $\phi^\delta=\phi_1^{\delta/2} \circ \phi_2^\delta \circ \phi_1^{\delta/2} + \mathcal{O}(\delta^3)$, i.e., Integrator \ref{int_1stOrderSymplectic} is 2nd-order due to a 3rd-order truncation error. However, that is \textbf{no longer the case} due to discontinuities in $p$ (momentum), and the method actually only has 2nd-order local truncation error in position (see Appendix \ref{appd_order1stOrderSymplectic} for a detailed demonstration).

More generally, a common technique for turning a 1st-order method into 2nd-order is to compose it with its adjoint (see e.g., Chap II.4 in \cite{Hairer06}). It is true that $\phi_1$ and $\phi_2$ are self-adjoint (a.k.a. symmetric or time-reversible) and they respectively form semigroups, and Integrator \ref{int_1stOrderSymplectic} thus may be seen as a 1st-order method $\psi^{\delta/2} := \phi_1^{\delta/2} \circ \phi_2^{\delta/2}$ composed with its adjoint $\left(\psi^{\delta/2}\right)^*= \phi_2^{\delta/2} \circ \phi_1^{\delta/2}$. However, the tradition proof of the increased order of $\psi^{\delta/2} \circ \left(\psi^{\delta/2}\right)^*$ relies on Taylor expansion, which is no longer applicable due to the discontinuity produced by $\phi_2$. Because of this, Integrator \ref{int_1stOrderSymplectic} loses some order of convergence, although the symmetric nature of its composition makes the method time-reversible.

One may also wonder if higher-order splitting schemes will work as designed, such as triple jump (whose one step update is given by $\phi_1^{\delta \gamma/2} \circ \phi_2^{\delta \gamma} \circ \phi_1^{\delta (1-\gamma)/2} \circ \phi_2^{\delta (1-2\gamma)} \circ \phi_1^{\delta (1-\gamma)/2} \circ \phi_2^{\delta \gamma} \circ \phi_1^{\delta \gamma/2}$, with $\gamma=1/(2-2^{1/3})$ \cite{creutz1989higher, forest1989canonical, suzuki1990fractal, Yoshida:90}, which normally is 4th-order, or more versions given in, for instance, \cite{McQu02, castella2009splitting, hansen2009high, blanes2013optimized, blanes2013new}. Unfortunately, they cannot produce anything beyond 1st-order (in position), again due to the lost of smoothness; the constant in the error bound may be improved though. Detailed proof by a counter-example is analogous to that in Appendix \ref{appd_order1stOrderSymplectic} and omitted.

Also important to note is, Integrator \ref{int_1stOrderSymplectic} only has a 1st-order local truncation error in momentum if the step includes a discontinuous momentum change. Without considering the problem's specific structure, this would imply a 0th-order global error, not only in momentum but also in the position variable as they are coupled. Such a `bad accuracy' is actually expected to some extent, because the momentum exhibits a jump discontinuity across each interface. To better explain this, suppose the actual time of an interface crossing is different from the crossing time of a numerical simulation, then no matter how close these two time points are and how accurate the numerical momentum is outside the interval limited by these two times, inside this time interval the numerical error in momentum is $\mathcal{O}(1)$, because there either the numerical solution or the exact solution has already completed an $\mathcal{O}(1)$ jump in momentum, but not both.

However, for our specific problem and specific integrator, `bad accuracy' is actually localized. Observed in all numerical experiments (Section \ref{sec_numerics}) was, the method still exhibited 1st-order global error in position, and while the momentum did not have 1st-order global error uniformly in time, away from interface crossing events the momentum was still 1st-order accurate. The intuition is, the numerical momentum will still catch up with the exact momentum after both solutions complete the interface crossing, and the effect of this lag will not be much amplified because we assumed there is no immediate consecutive interface crossings, and \emph{impact} and \emph{flow} have to alternate.

\subsection{Computing the time to \emph{impact} in complex situations}
\label{sec_timeToImpact}
Given $q,p$ and a generic explicit flow map $\Phi^h[q,p]$, which is either provided by an exact solution (the case of \eqref{eq_phi2noImpact}, used for example in the 1st-order method in Section \ref{sec_1stOrderSympl}), or provided by a numerical integrator (the case for the high-order construction in Section \ref{sec_highOrderNonSympl}), we can accurately and efficiently compute the time $\tau$ such that the position component of $\Phi^\tau[q,p]$ exactly hits an interested interface $B_{ij}$. In \cite{JinWuHuang} a linear interpolation was used to approximate $\tau$. Here we will compute it more accurately. 
Two cases will be discussed.

For simplified notation, $Q(h)$ will denote the position component of $\Phi^h[q,p]$.
\begin{itemize}
\item
\textbf{When the interface is given by a level set.} If $B_{ij}=f^{-1}(\{0\})$ for some known $\mathcal{C}^1$ function $f(\cdot)$, and $Q(h)$ is a $\mathcal{C}^1$ function (any exact flow of a continuous system satisfies this property, and so should any reasonable approximation of an exact flow), we use \textbf{Newton's method} to solve for $\tau$ in
\[
	f(Q(\tau))=0.
\]
The solution is given by the simple iteration
\[
	\tau_{k+1}=\tau_k - \frac{f(Q(\tau_k))}{f'(Q(\tau_k))Q'(\tau_k)}.
\]
Under standard (reasonable) assumptions this iteration converges quadratically. This means that $\tau$ can be computed to machine precision by a small amount of iterations. Since $\tau$ is just 1-dimensional, these iterations are computationally cheap. Initialization can be made efficient too, for instance via
\[
	\tau_0=-\delta f(Q(0))/f(Q(\delta)),
\]
derived from $0=f(Q(\tau))\approx f(Q(0))+\frac{\tau}{\delta} f(Q(\delta))$.

This case is demonstrated by the numerical experiment in Section \ref{sec_numericsModifiedKepler}.

\item
\textbf{When the interface is only known indirectly through $V(\cdot)$ values,} we use \textbf{the bisection method}: Initially, let $t_l=0$ and $t_r=\delta$. At each iteration, let $t_m=(t_l+t_r)/2$ and compare $V(Q(t_m))$ with $V(Q(t_l))$ and $V(Q(t_r))$; if it agrees with the left value, update by $t_r \leftarrow t_m$, and other update by $t_k \leftarrow t_m$. Terminate the iteration when $t_r - t_l < \epsilon$ for some preset small $\epsilon$, and let $\tau=t_m$.

This way, no derivative information is needed, and the convergence of estimated $\tau$ values is linear, which implies an exponential decay of error. This speed of convergence is slower than that of Newton, but difficult to improve in this setup because one can only evaluate $V$'s values at chosen locations and $V$ is piecewise constant. On the other hand, to obtain any prefixed accuracy, one still only needs logarithmically many iterations. Therefore, machine precision can again be achieved with a small computational budget.

This case is demonstrated by the numerical experiment in Section \ref{sec_numericsModifiedKepler}.

\end{itemize}

\subsection{A third-order in position time-reversible symplectic integrator, when there is only one interface and one degree of freedom}
\label{sec_3rdOrderSympl}
When $U$ is at least a $\mathcal{C}^3$ function, position is one-dimensional and there is only one interface, we are able to increase the order of convergence while maintaining the symplecticity. To do so, denote the interface location by $q_\text{jump}$, and we first decompose the smooth nonlinear potential $U$ into a quadratic approximation centered at $q_\text{jump}$ and a nonlinear correction: let
\begin{align*}
	U_\text{quad}(q) &:= U(q_\text{jump}) + \frac{dU}{dq}(q_\text{jump})\big(q-q_\text{jump}\big) + \frac{1}{2}\frac{d^2 U}{dq^2}(q_\text{jump}) \big(q-q_\text{jump}\big)^2 , \\
	U_\text{corr}(q) &:= U(q)-U_\text{quad}(q) .
\end{align*}
Then we split the original Hamiltonian as the sum of a discontinuous quadratic problem and a correction: let 
\begin{equation}
	H_1=\frac{1}{2}p^T p+V(q)+U_\text{quad}(q)		\quad \text{and} \quad
	H_2=U_\text{corr}(q)
	\label{eq_SplitHamiltoniansFor3rdOrder}
\end{equation}
and denote by $\phi_1^\delta$ and $\phi_2^\delta$ respectively their exact solution flows. Then both are symplectic maps and explicitly obtainable: the latter is a simple drift in the momentum, and the former is given in Section \ref{sec_quadraticPotentialSln}. More precisely, to convert to notations in Section \ref{sec_quadraticPotentialSln}, it is easy to see
\begin{align*}
& U_\text{quad} = \frac{1}{2} \omega^2 (q-q_\text{off})^2 + (an~unimportant~constant), \\
\text{where} \quad & \omega^2=U''(q_\text{jump}),
\qquad
q_\text{off}=q_\text{jump}-U'(q_\text{jump})/U''(q_\text{jump}).
\end{align*}

Finally, we combine this splitting and the classical idea of triple-jump to obtain a method whose one step update is given by:

\begin{Integrator}[3rd-order in position, time-reversible, symplectic]
Use one step update
\[
	[q_{n+1},p_{n+1}] = \phi_2^{\delta \gamma/2} \circ \phi_1^{\delta \gamma} \circ \phi_2^{\delta (1-\gamma)/2} \circ \phi_1^{\delta (1-2\gamma)} \circ \phi_2^{\delta (1-\gamma)/2} \circ \phi_1^{\delta \gamma} \circ \phi_2^{\delta \gamma/2} [q_n,p_n],
\]
where $\gamma=1/(2-2^{1/3})$.
	\label{int_3rdOrderSymplectic}
\end{Integrator}

\begin{Remark}[order reduction]
	This method does not have a 4th-order global error as a continuous analogue would have, but numerically observed is that it has 3rd-order global accuracy in terms of $q$, and the global error of $p$ is also 3rd-order whenever at a time point sufficiently away from \emph{impact}.
\end{Remark}

\begin{Remark}[an alternative approach]
	Triple-jump is a classical way of obtaining a 4th-order smooth integrator, but it is not the only approach. Another classical example is Suzuki's fractal \cite{suzuki1990fractal}, which in our case is
\[
	\phi_2^{\delta \gamma/2} \circ \phi_1^{\delta \gamma} \circ \phi_2^{\delta \gamma} \circ 
	\phi_1^{\delta \gamma} \circ
	\phi_2^{\delta (1-3\gamma)/2} \circ \phi_1^{\delta (1-4\gamma)} \circ \phi_2^{\delta (1-3\gamma)/2} \circ 
	\phi_1^{\delta \gamma} \circ
	\phi_2^{\delta \gamma} \circ \phi_1^{\delta \gamma} \circ \phi_2^{\delta \gamma)/2}
\]
where $\gamma=1/(4-4^{1/3})$. It often leads to error at the same order but with smaller prefactor, however at the expense of using more stages.

Suzuki's fractal doesn't directly transfer to our non-smooth setup either. If one uses the $\phi_1$ and $\phi_2$ constructed in Sec.\ref{sec_1stOrderSympl} for generic problems, the resulting method remains only 1st-order. With the specialized $\phi_1$ and $\phi_2$ in this section, the result will be 3rd-order (in position, away from \emph{impact}), same as triple jump (note the order reduction).

Symplecticity and reversibility, however, can be obtained.
\end{Remark}

\begin{Remark}[2nd-order in position, time-reversible, symplectic integrator]
	If a one-step update of $\phi_2^{\delta/2} \circ \phi_1^\delta \circ \phi_2^{\delta/2}$ is used instead (i.e., Strang splitting), numerical experiments suggest that the method has a 2nd-order global error; that is, no order was lost. This is an improvement of the generic method in Section \ref{sec_1stOrderSympl}.
	\label{int_2ndOrderSymplectic}
\end{Remark}

Our intuition behind the reduced order (4$\to$3) is, away from interfaces the triple-jump and the Strang splitting will have regular 5th-order and 3rd-order local truncation errors, and their truncation errors degrade to lower order only when the current step involves an \emph{impact}. However, to encounter an \emph{impact}, the current position must be $\mathcal{O}(h)$ away from the interface, which means, according to Taylor expansion, $U_\text{corr}$ is $\mathcal{O}(h^3)$ and $U'_\text{corr}$ is $\mathcal{O}(h^2)$. This exact scaling allows the splitting (global) accuracy to increase from 1st-order (see Section \ref{sec_1stOrderSympl}) to 3rd-order near the \emph{impact}. If a splitting scheme of order higher than 3 in a smooth setup is used (e.g., triple jump), 3rd-order will be retained; in the case of Strang splitting, itself is only 2nd-order, and therefore its discontinuous version based on \eqref{eq_SplitHamiltoniansFor3rdOrder} is still just 2nd-order due to limitations of non-\emph{impact} steps.

The aforementioned methods based on triple jump and Strang splitting, or any symmetric composition of $\phi_1$ and $\phi_2$, are reversible.

\begin{Remark}
The method in this section generalizes to only a small subset of multi-degree-of-freedom problems: if there is only one interface and it is linear, then we can similarly construct $U_{quad}$ by expanding $U$ in the direction normal to the interface; if there are multiple interfaces and all of them are linear, a Voronoi decomposition may help construct a continuous, piecewise quadratic $U_{quad}$; however, it is unclear whether the idea in this subsection can work for nonlinear interface(s).
\end{Remark}

\subsection{High-order time-reversible but nonsymplectic integrators}
\label{sec_highOrderNonSympl}
Although it is difficult to obtain a high-order symplectic method due to discontinuity in momentum (see discussions in Sections \ref{sec_1stOrderSympl} and \ref{sec_3rdOrderSympl}), an event-driven approach can be adopted to construct arbitrarily high-order time-reversible methods. It is not clear yet what would be the disadvantage of losing symplecticity, as numerical experiments will also demonstrate pleasant long-time properties, similar to those given by the long time error analysis of smooth reversible integrator for reversible integrable systems \cite{Hairer06}; note however that a theoretical explanation is lacking.

The one-step update of the new method, with step size denoted by $\delta$, will be constructed based on an $l$-th order symplectic integrator for the smooth Hamiltonian $\hat{H}=\frac{1}{2}p^T p+U(q)$. The construction of such integrators is well known (see e.g., \cite{Hairer06}, or \cite{Ta16} for a recap). Denote the one-step update of this integrator by $\psi^\delta$, where $\delta$ is the step size. We will also assume the explicit existence of a function $I(q)$ that return $i$ if $q\in D_i$, the $i$-th component of the partition of position space into smooth regions (see its definition below \eqref{eq_canonicalHamiltonian}). Then a high-order integrator for the discontinuous problem $H$ can be constructed through the following stages:
\begin{Integrator}[$l$-th order in position, time-reversible (if the legacy method $\psi$ is reversible)] ~
\begin{enumerate}
\item \label{item_stage1}
	Given the current d-dimensional position $q$ and momentum $p$, \textbf{check if an \emph{impact} will occur in $\delta$ time.} To do so, compute $[Q,P]=\psi^\delta[q,p]$. 
	If $I(q)\neq I(\hat{q})$, at least one \emph{impact} occurred.
	
	If no impact occurred, update $[q,p]$ by $[Q,P]$, and go back to Stage \ref{item_stage1} for the next step. Otherwise, continue and denote by $B_{ij}$ the relevant interface $B_{I(q)I(\hat{q})}$.
	
\item
	Otherwise, \textbf{numerically approximate the time $\tau$ (under the flow of $\hat{H}$) to \emph{impact}}. Depending on how the interface $B$ is provided, this first hitting time $\tau$ may be explicitly computable, or it needs to be numerically estimated. For the latter case, Section \ref{sec_timeToImpact} provides details about $\tau$'s rapid computation to machine precision, when the interface is provided (i) as a level set of a $\mathcal{C}^1$ function, or (ii) in the general case, purely through values of the discontinuous potential $V(\cdot)$.

\item \label{item_stage3}
	Update $q$,$p$ using \textbf{one step of the continuous symplectic integrator with step size $\tau$}; i.e,
	\[
		[q,p] \leftarrow \psi^\tau[q,p]
	\]
	
\item \label{item_stage4}
	Compute the action of an \emph{impact}. That is, based on (i) the jump size of $V$ given by the interface $B_{ij}$, and (ii) the normal vector of $B_{ij}$ at position $q$ from $D_i$ to $D_j$, perform an instantaneous \textbf{update of $p$ either via \emph{refraction} (eq. (\ref{eq_refraction})) or \emph{reflection} (eq. (\ref{eq_reflection}))}.
	
	Note: if $B_{ij}$ is only implicitly described by $V$ values, the normal vector has to be numerically searched, but this search can again be done efficiently and to machine precision using the bisection method, which will take poly-log time.

\item \label{item_stage5}
	Update $q$,$p$ again using \textbf{one step of the continuous symplectic integrator, this time with step size $\delta-\tau$}; i.e,
	\[
		[q,p] \leftarrow \psi^{\delta-\tau}[q,p]
	\]
\end{enumerate}
\label{int_lstOrder}
\end{Integrator}

\begin{Remark}[Applicability]
	As long as $\delta$ is small enough, there is at most one \emph{impact} per step under Conditions \ref{asmpt_oneInterface}, \ref{asmpt_noInterfaceSliding} and the assumption that $D_i$'s are open sets.
\end{Remark}

\begin{Remark}[Non-symplecticity]
	(i) If $\tau$ were a constant (independent of $q,p$), then Stage \ref{item_stage3} and Stage \ref{item_stage5} are both symplectic updates; however, Stage \ref{item_stage4} is not symplectic.
	(ii) If $\tau$ were the exact hitting time ($q,p$ dependent) and $\psi$ were the exact flow of $\hat{H}$, the new method would be symplectic, but this is unlikely to be possible because then the new method is in fact exact. In this case, neither Stage \ref{item_stage3}, \ref{item_stage4} or \ref{item_stage5} is symplectic, but their composition is.
	(iii) For the new method, Stage \ref{item_stage4} remains non-symplectic, neither Stage \ref{item_stage3} or \ref{item_stage5} is symplectic because $\tau$ depends on $q,p$, and the composition of stages is generally not symplectic.
\end{Remark}

\begin{Remark}[Order of accuracy]
The accuracy of the new method is numerically observed in experiments to be $l$-th order (same as the legacy smooth integrator $\psi$), however in a modified sense that the global error of position is $\mathcal{O}(\delta^l)$, and that of momentum is $\mathcal{O}(\delta^l)$ at time points away from interface crossings (the accuracy in $p$ is 0th-order near interface crossings). The intuition is, during simulation till a fixed $\mathcal{O}(1)$ time, there are only finitely many interface crossing events, which means the $\mathcal{O}(\delta^l)$ error of $\psi$ will only be amplified at most by a constant factor.
\end{Remark}

\begin{Remark}[Reversibility]
The new method can be easily checked to be reversible if (i) $\psi$ is reversible, (ii) the hitting time is solved for exactly, (iii) there is at most one interface crossing per step.
\end{Remark}

\subsection{The adaptive version for safer usages of large timesteps}
\label{sec_highOrderNonSymplAdaptive}
Depending on the problem, there might be a regime of $\delta$ values that are small enough for resolving the dynamics generated by the smooth Hamiltonian $\hat{H}$, however too large in the sense that multiple interface crossings can be encountered within one step. This can happen, for instance, when an interface has a large curvature, which may result in consecutive \emph{impact}s that are close in time. In this case, methods proposed cannot be accurate when the step size $\delta$ is large.

To avoid this deterioration of accuracy, one could of course simply reduce $\delta$, so that at most one interface crossing occurs per step. However, this is not the most computationally efficient solution. Instead, we propose to employ an adaptive time-stepping approach. Our way of introducing adaptivity will destroy symplecticity, and therefore it will be demonstrated on the high-order nonsymplectic method in Section \ref{sec_highOrderNonSympl}. Most parts will be the same as before, except for a handful of modifications which are underlined:

\begin{Integrator}[$l$-th order in position, adaptive]
\uline{At the beginning of each step, let $\hat{\delta}=\delta$. Then,}
\begin{enumerate}
\item
	Given the current d-dimensional position $q$ and momentum $p$, \textbf{check if an \emph{impact} will occur in \uline{$\hat{\delta}$} time.} To do so, compute $[Q,P]=\psi^\delta[q,p]$. 
	If $I(q)\neq I(\hat{q})$, at least one \emph{impact} occurred.
	
	If no impact occurred, update $[q,p]$ by $[Q,P]$, and \uline{the current step is completed. Otherwise, continue.}

\item
	\textbf{Numerically approximate the time $\tau$ (under the flow of $\hat{H}$) to \uline{the first \emph{impact}}}. Depending on how the interface $B$ is provided, this first hitting time $\tau$ may be explicitly computable, or it needs to be numerically estimated. For the latter case, Section \ref{sec_timeToImpact} provides details about $\tau$'s rapid computation to machine precision, when the interface is provided (i) as a level set of a $\mathcal{C}^1$ function, or (ii) in the general case, purely through values of the discontinuous potential $V(\cdot)$. \uline{Note that there is no guarantee those numerical estimations really correspond to the first hitting time, if $\hat{\delta}$ is too large}.

\item
	Update $q$,$p$ using \textbf{one step of the continuous symplectic integrator with step size $\tau$}; i.e,
	\[
		[q,p] \leftarrow \psi^\tau[q,p]
	\]

\item
	Compute the action of an \emph{impact}. That is, based on (i) the jump size of $V$ given by the interface $B_{ij}$, and (ii) the normal vector of $B_{ij}$ at position $q$ from $D_i$ to $D_j$, perform an instantaneous \textbf{update of $p$ either via \emph{refraction} (eq. (\ref{eq_refraction})) or \emph{reflection} (eq. (\ref{eq_reflection}))}.
	
	Note: if $B_{ij}$ is only implicitly described by $V$ values, the normal vector has to be numerically searched, but this search can again be done efficiently and to machine precision using the bisection method, which will take poly-log time.

\item
	\uline{Update the remaining time using $\hat{\delta} \leftarrow \hat{\delta} - \tau$, and return to Stage 1.}
\end{enumerate}

\label{int_adaptive}
\end{Integrator}

\begin{Remark}
	This method no longer requires at most one \emph{impact} per step, and it is effective as long as the computed hitting time corresponds to the first hitting. It is not symplectic. Order of accuracy is numerically observed to be $l$ in the same sense as before. Reversibility is achieved if (i) $\psi$ is reversible, and (ii) every time the first (backward) hitting time is exactly solved for.
\end{Remark}

The example in Section \ref{sec_numericsMushroom} showcases the efficacy of this adaptive method. There the interface actually has kinks (corresponding to corners). Numerically these corners are never exactly reached, but \emph{impacts} near them can be clustered in time.

\section{Numerical experiments}
\label{sec_numerics}

\subsection{Quantification via a quadratic benchmark problem}
\label{sec_numericsBenchmark}


\paragraph{Setup.}
Consider a one degree-of-freedom problem where the smooth part of the potential is a quadratic function and the jump part corresponds to only one interface, i.e., \eqref{eq_quadraticSolvableSetup}. This is an analytically solvable case (see Sec.\ref{sec_quadraticPotentialSln}), and we chose it so that numerical and exact solutions can be compared to precisely quantify long time accuracy and numerical order of convergence.

Here we use parameters $\omega=2, q_\text{off}=1, \Delta V=3, q_\text{jump}=2$.

\begin{figure}
	\centering
    \begin{subfigure}[t]{0.49\textwidth}
		\includegraphics[width=\textwidth]{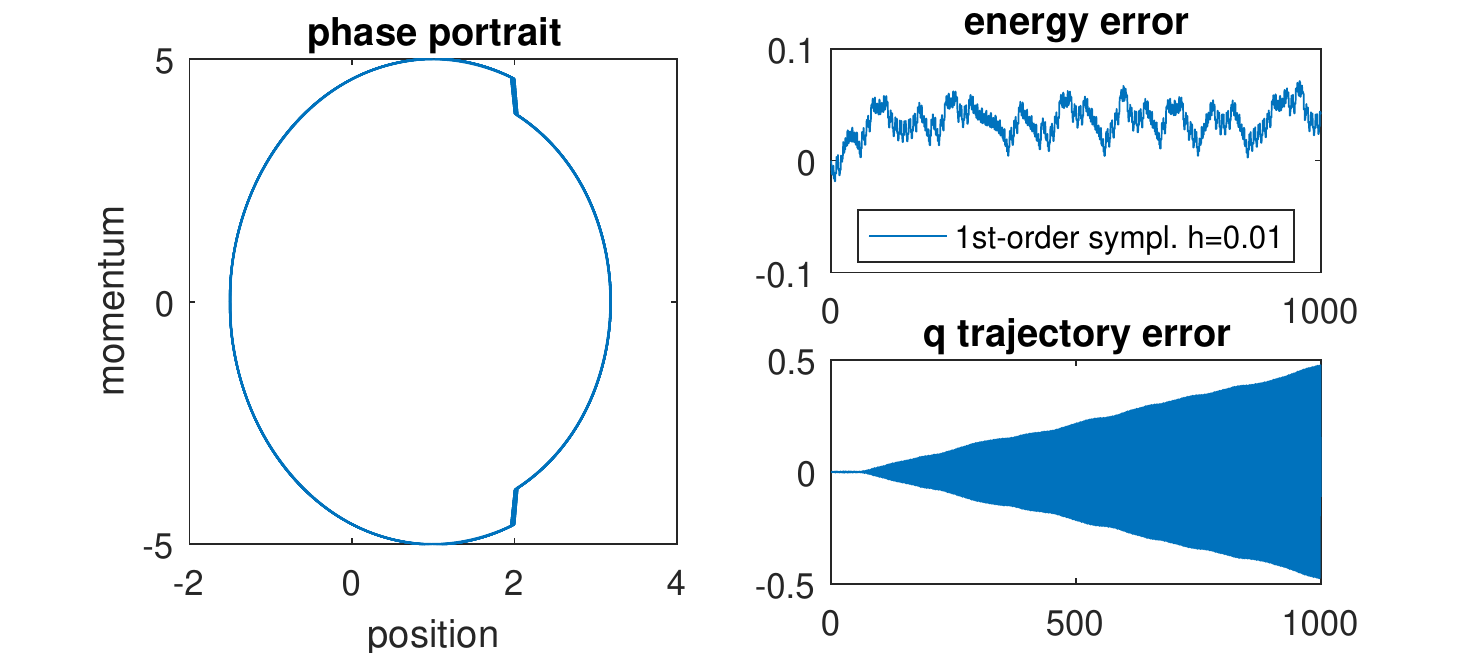}
	\end{subfigure}
    \begin{subfigure}[t]{0.49\textwidth}
		\includegraphics[width=\textwidth]{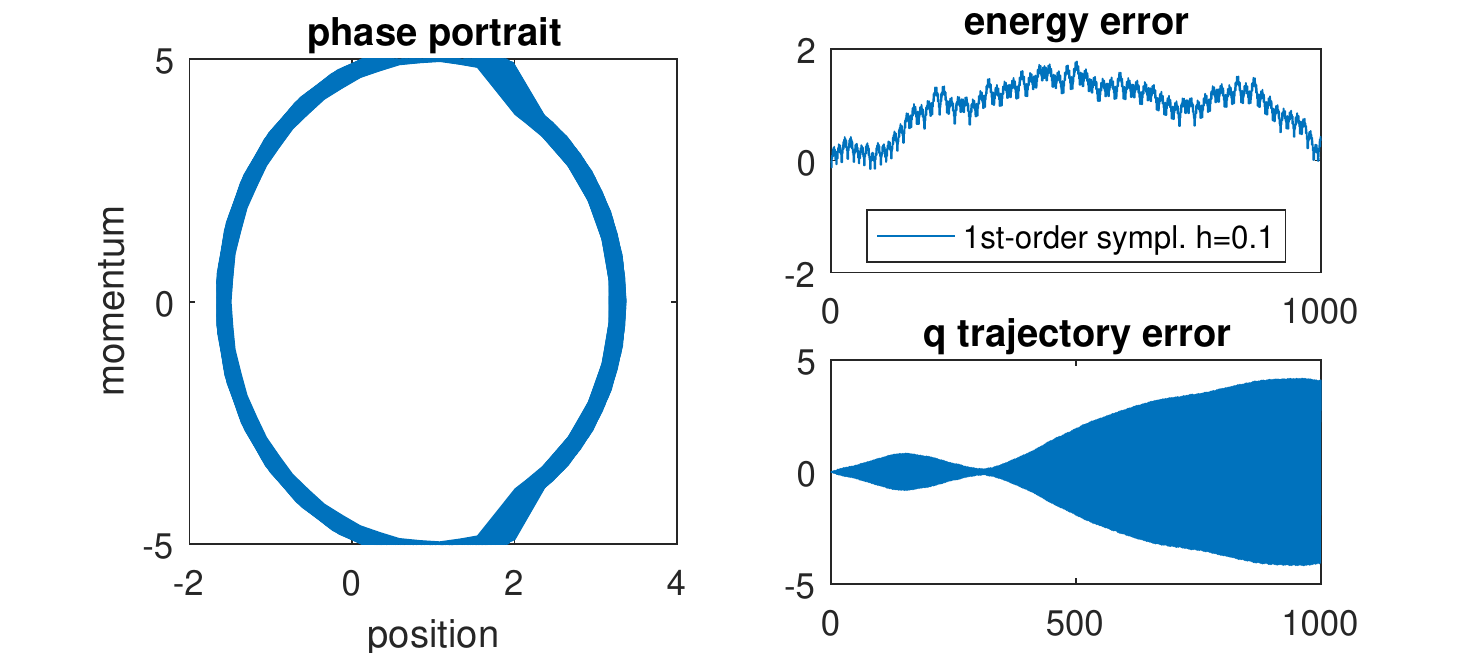}
	\end{subfigure}
    \begin{subfigure}[t]{0.49\textwidth}
		\includegraphics[width=\textwidth]{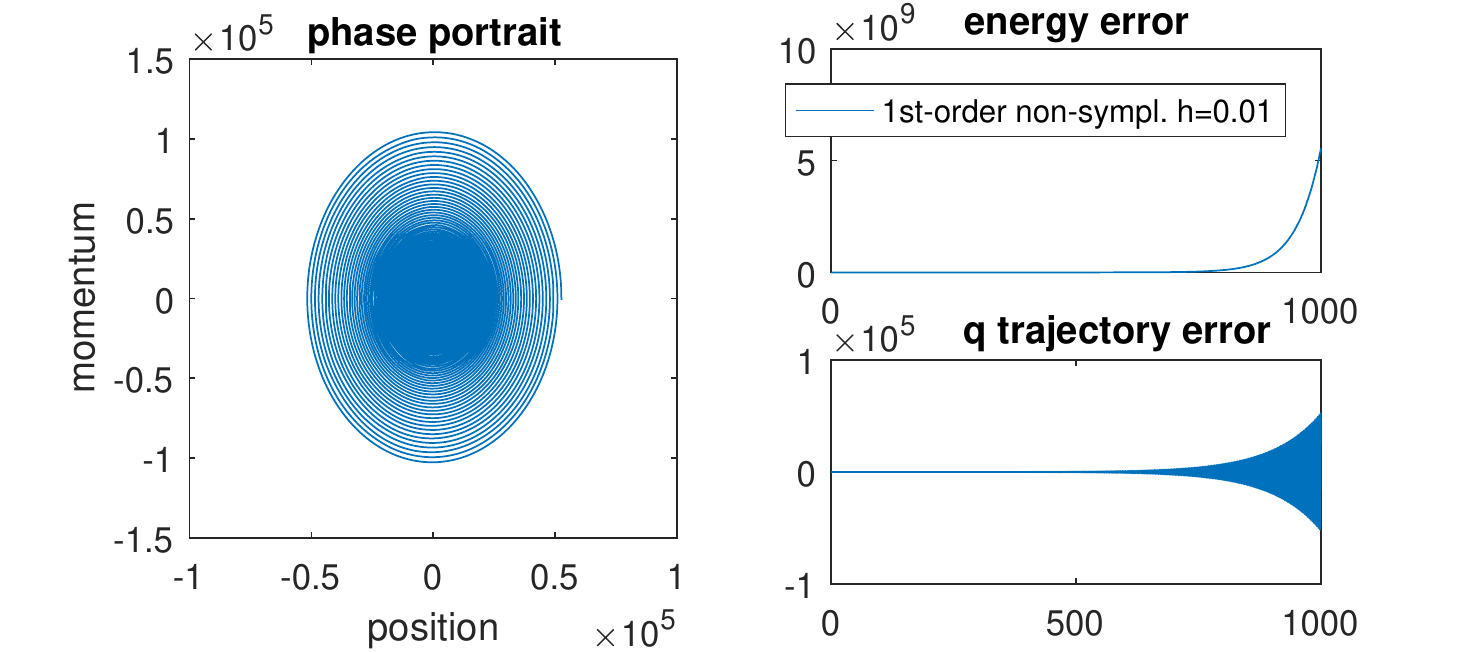}
	\end{subfigure}
    \begin{subfigure}[t]{0.49\textwidth}
		\includegraphics[width=\textwidth]{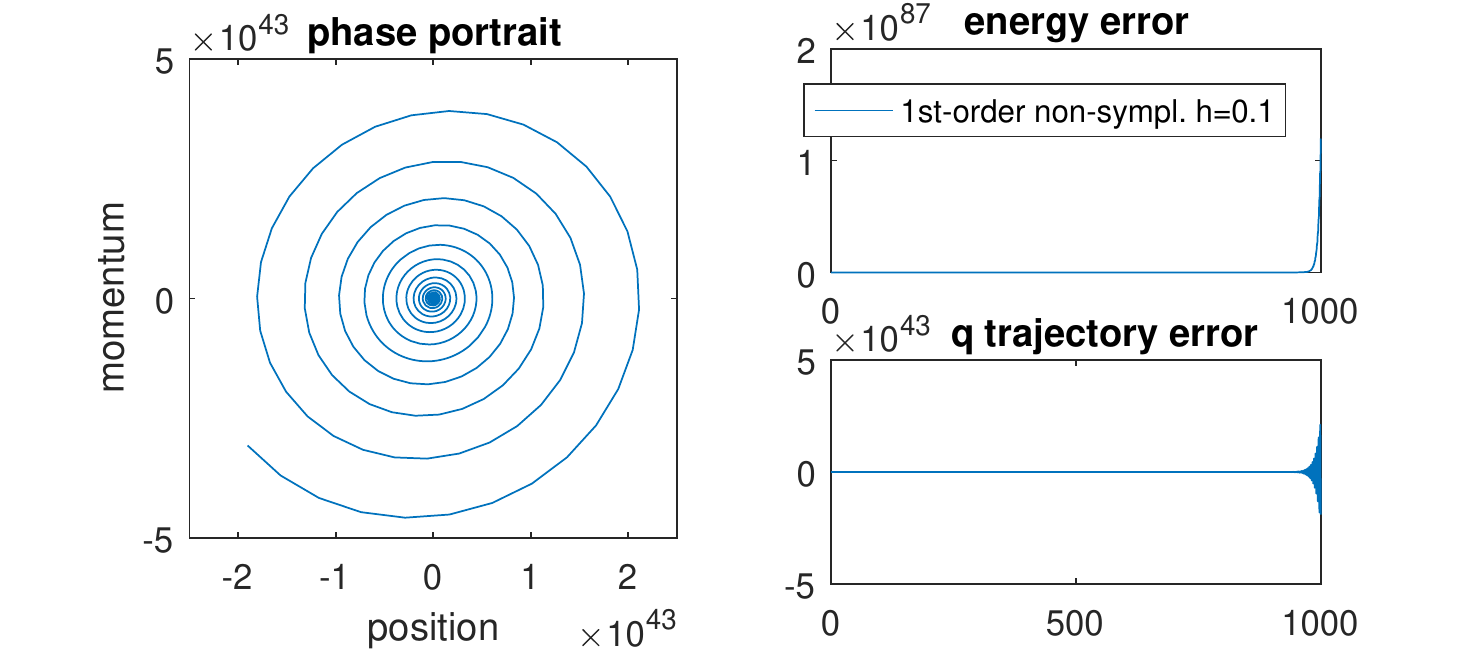}
	\end{subfigure}
    \begin{subfigure}[t]{0.49\textwidth}
		\includegraphics[width=\textwidth]{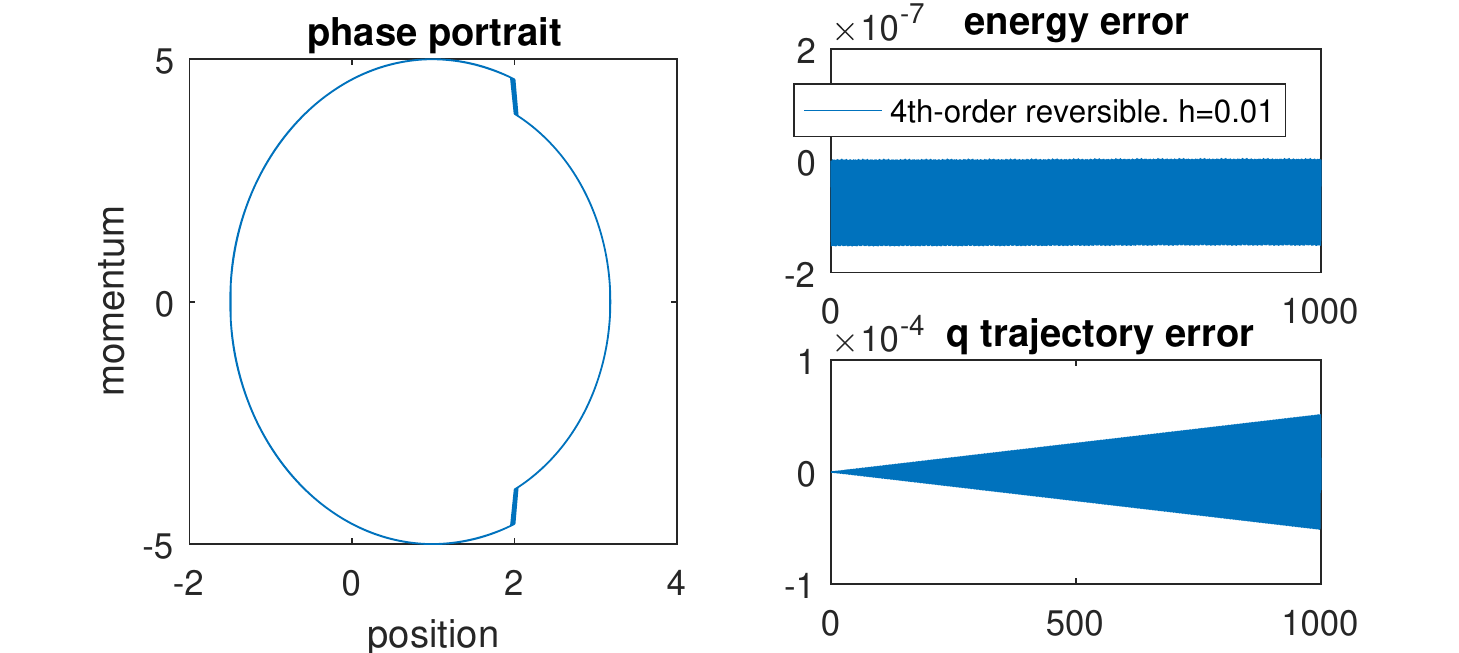}
	\end{subfigure}
    \begin{subfigure}[t]{0.49\textwidth}
		\includegraphics[width=\textwidth]{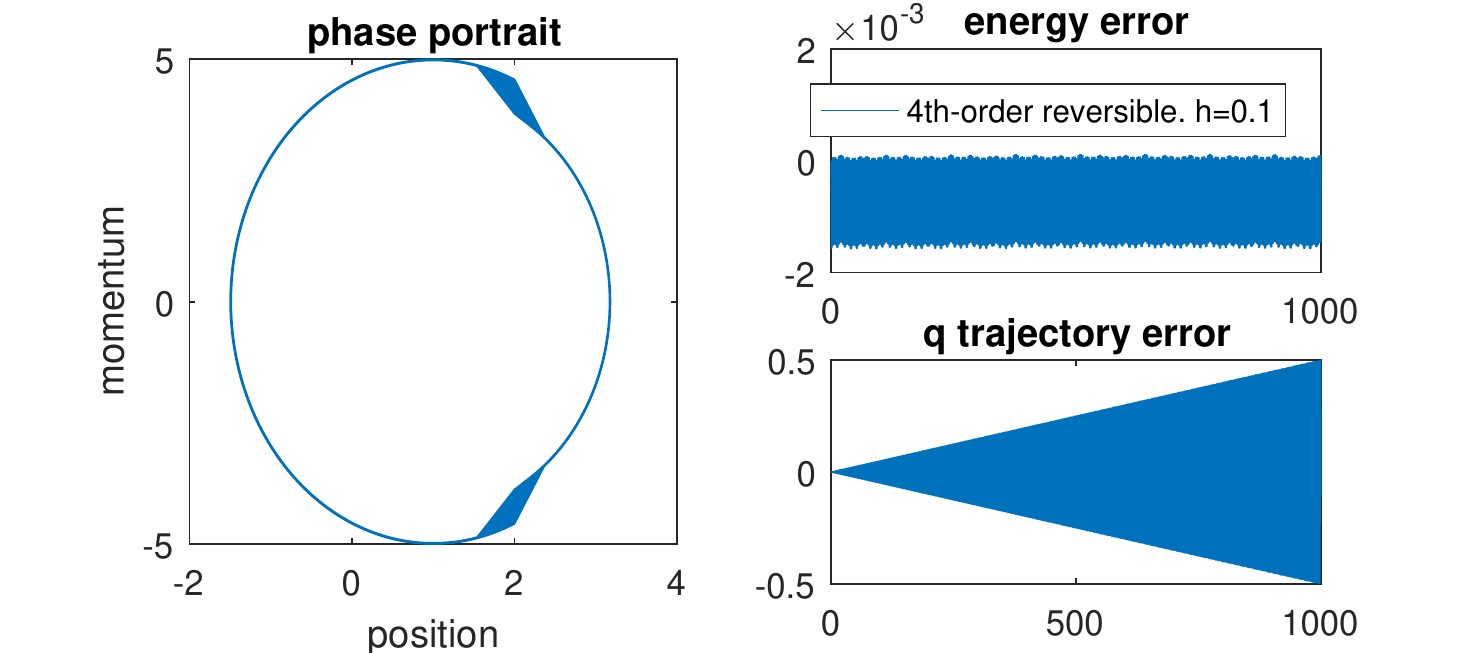}
	\end{subfigure}
    \begin{subfigure}[t]{0.49\textwidth}
		\includegraphics[width=\textwidth]{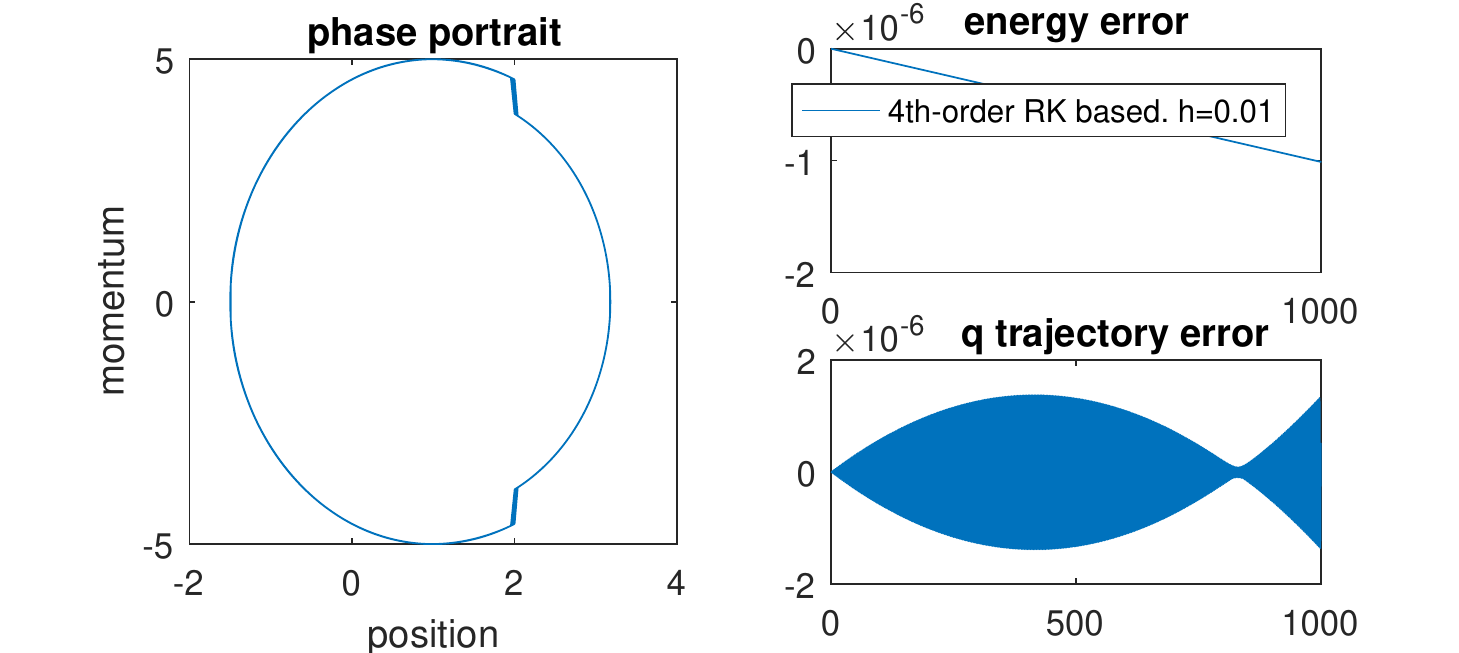}
	\end{subfigure}
    \begin{subfigure}[t]{0.49\textwidth}
		\includegraphics[width=\textwidth]{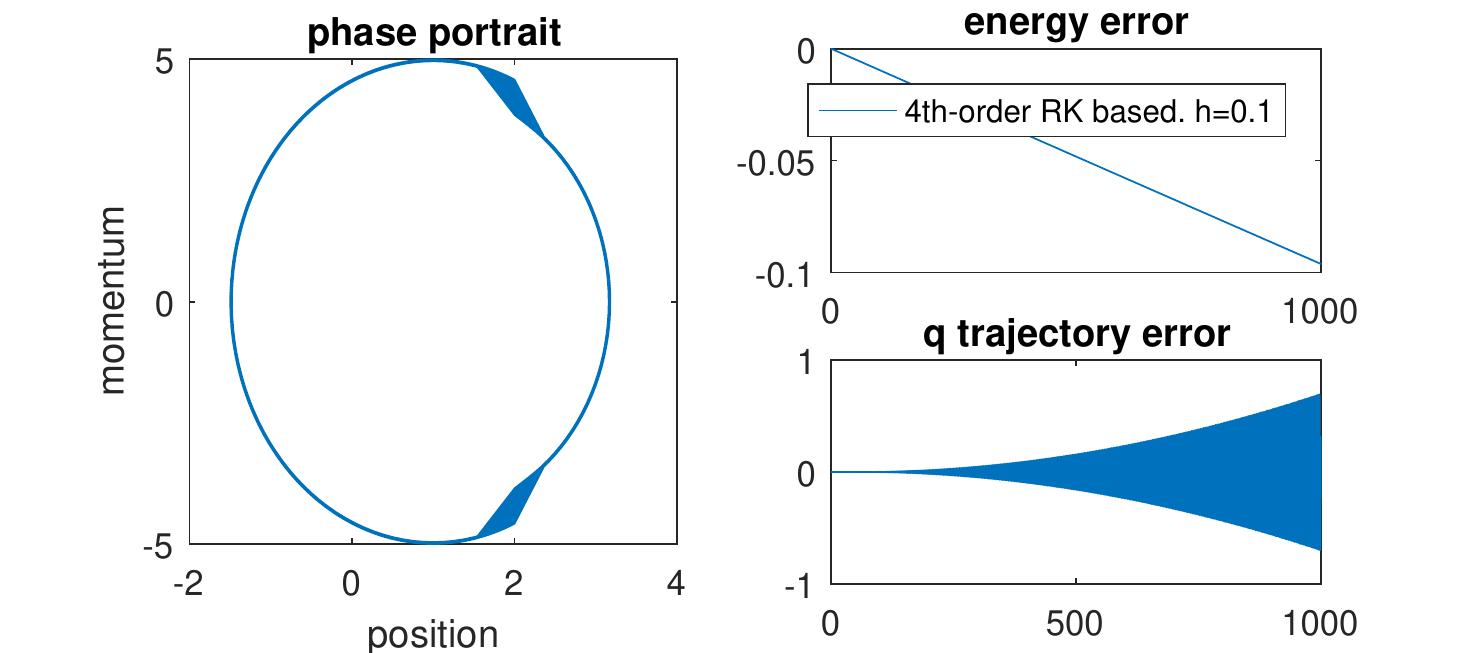}
	\end{subfigure}
    \begin{subfigure}[t]{0.49\textwidth}
		\includegraphics[width=\textwidth]{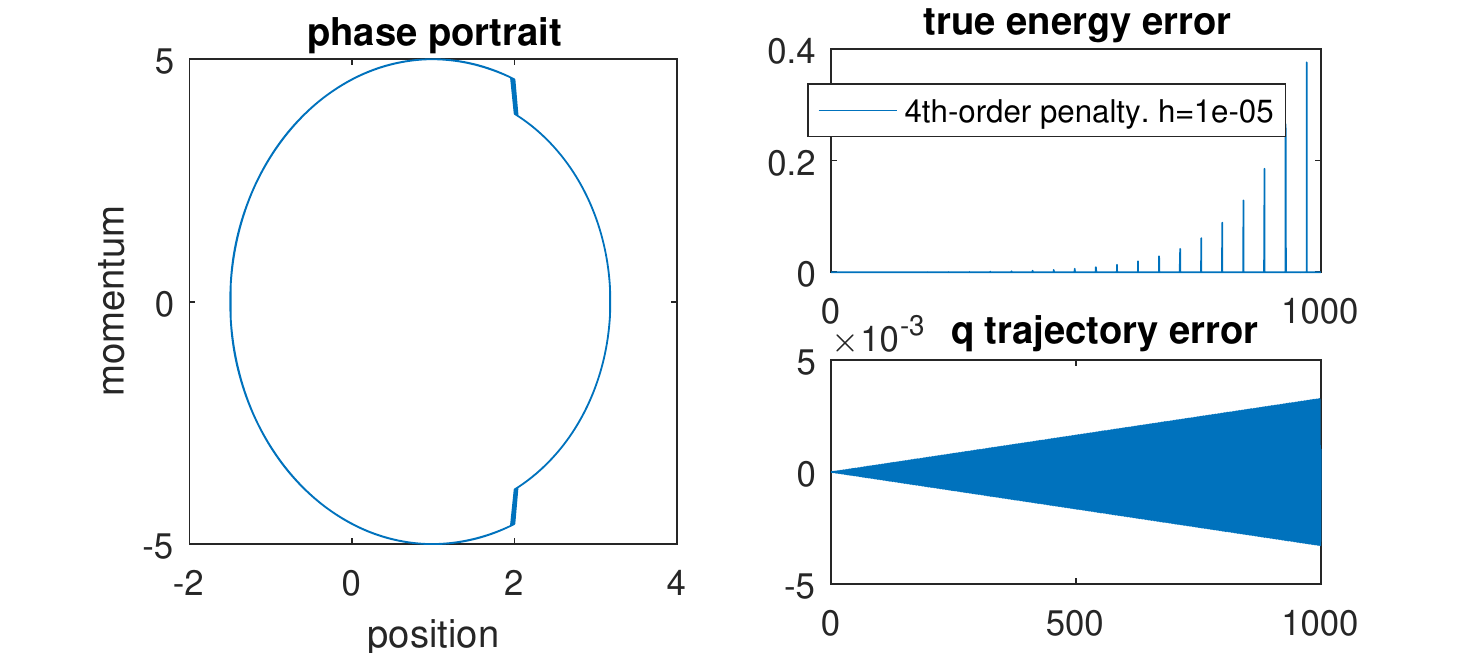}
	\end{subfigure}
    \begin{subfigure}[t]{0.49\textwidth}
		\includegraphics[width=\textwidth]{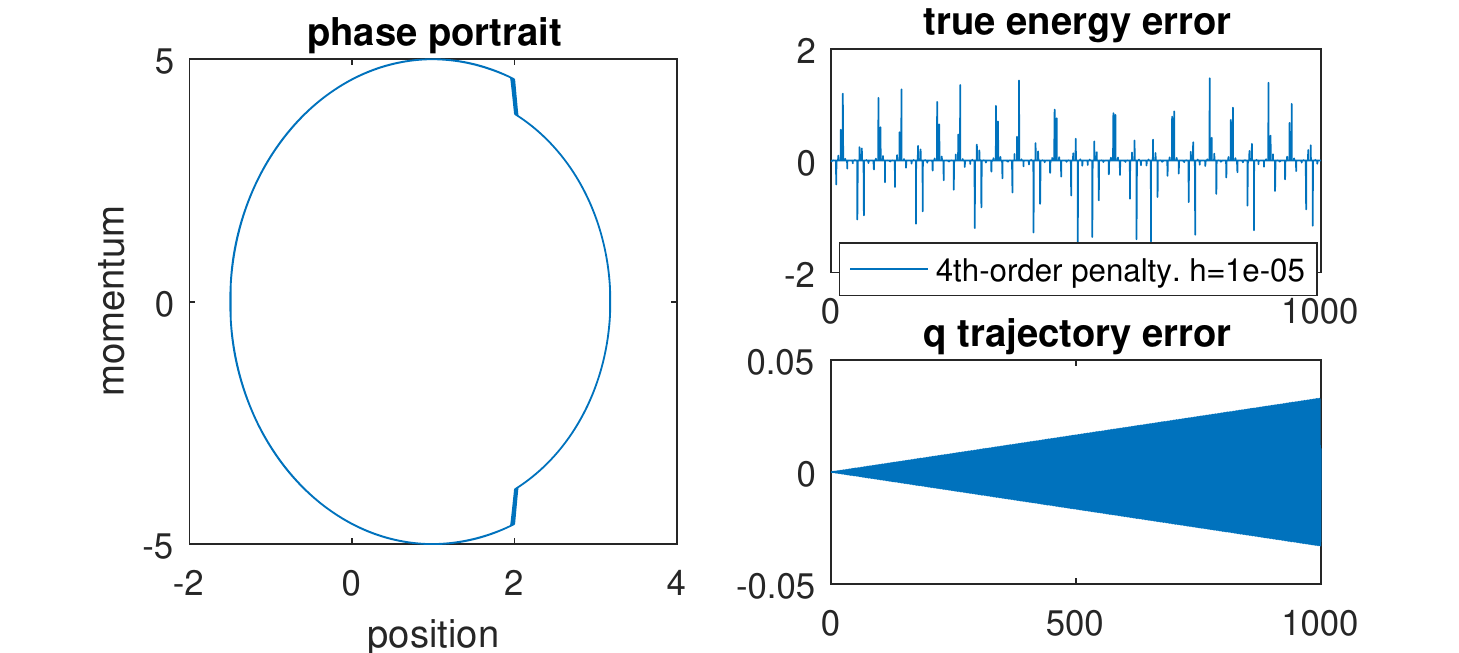}
	\end{subfigure}
	\caption{A benchmark problem: potential = quadratic + step function. First 4 rows: respectively, 1st-order symplectic, 1st-order non-symplectic, 4th-order reversible, 4th-order \emph{ir}reversible (Runge-Kutta based); left: $h=0.01$, right: $h=0.1$. Last row: penalty method, i.e., 4th-order symplectic integration of a regularized Hamiltonian; left: $\alpha=10^4$, $h=10^{-5}$, right: $\alpha=10^3$, $h=10^{-5}$.}
	\label{fig:quadraticProb}
\end{figure}

Fig. \ref{fig:quadraticProb} compares the performances of our 1st-order symplectic Integrator \ref{int_1stOrderSymplectic} (Sec. \ref{sec_1stOrderSympl}), a non-symplectic version of this 1st-order method, our 4th-order reversible Integrator \ref{int_lstOrder} (based on a 4th-order reversible smooth integrator) (Sec. \ref{sec_highOrderNonSympl}), an irreversible version of this 4th-order method (based on a smooth integrator of 4th-order Runge-Kutta), and the penalty method (based on a regularized Hamiltonian and a 4th-order reversible symplectic integration of this smooth Hamiltonian). The 3rd-order symplectic integrator in Sec. \ref{sec_3rdOrderSympl} is excluded because it gives the exact solution for this example. The exact solution is periodic with period $\approx 3$, and the comparison is over $T=10^3$ which should be considered as a long time.

The penalty method simulates a regularized smooth penalty Hamiltonian
\begin{equation}
	H(q,p)=\|p\|_2^2/2+U(q)+\Delta V \frac{1}{1+\exp\big(-\alpha(q-q_{jump})\big)}.
	\label{eq_regularizedHamiltonianNumerics1}
\end{equation}
The simulation uses 4th-order symplectic integrator based on triple jump (see e.g., \cite{Hairer06}).

The non-symplectic 1st-order integrator used is simply a forward Euler type, with one $h$-step update given by
\[
	[q,p] \mapsto [q,p]+(\phi_1^{h/2}-id)[q,p] + (\phi_2^h-id)[q,p] + (\phi_1^{h/2}-id)[q,p],
\]
where $\phi_1$ is given by \eqref{eq_phi1}, $\phi_2$ is given by (\ref{eq_phi2hittingTime}--\ref{eq_phi2postImpact}), and $id$ is the identity map. As a reminder and a comparison, the symplectic 1st-order integrator used here is $[q,p] \mapsto \phi_1^{h/2} \circ \phi_2^h \circ \phi_1^{h/2} [q,p]$.

\paragraph{Results.} The left half of Fig.\ref{fig:quadraticProb} shows that the 1st-order symplectic method and the 4th-order reversible method exhibit linear growth of error, and almost no drift in energy but only fluctuations. Both are similar to that of the symplectic/reversible integration of smooth integrable systems. On the contrast, the 1st-order non-symplectic method has too much artificial energy injected due to the numerics, and the 4th-order \emph{ir}reversible method (Runge-Kutta based) has artificial energy dissipation, although the amount is small due to small $h$, high-order, and $T$ not too large. More on long time performance will follow.

Comparing the first 4 rows of the left and right halves of Fig.\ref{fig:quadraticProb}, which differ by different step sizes, one sees consistency with the claimed order of each method. More on convergence order will follow.

The 5th row shows that the penalty method, when used with a sufficiently small $h$, such as $o(1/\alpha)$, has error that is only 1st order in $1/\alpha$.

\begin{figure}
	\centering
	\includegraphics[width=\textwidth]{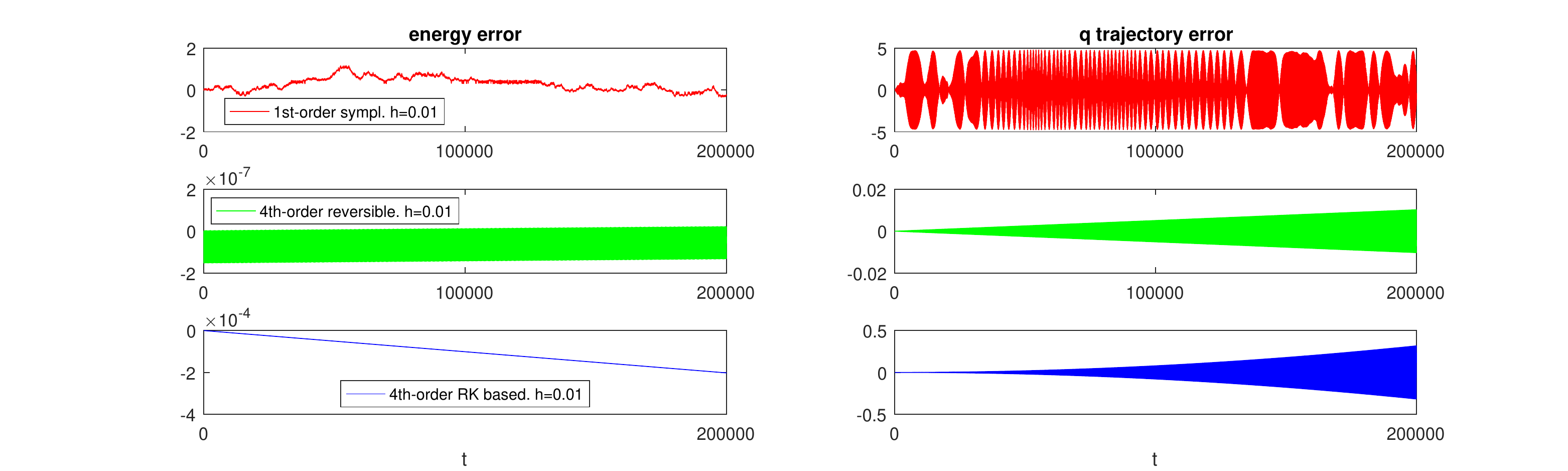}
	\caption{Super long time performances of proposed methods. 3 rows: respectively, 1st-order symplectic, 4th-order reversible, 4th-order irreversible. Penalty method unaffordable.}
	\label{fig:quadraticProbLongTime}
\end{figure}

To further study these observations, performances over even longer time span ($T=2\times 10^5$) are investigated in Fig.\ref{fig:quadraticProbLongTime}. We see rather irregular, however bounded global error of the 1st-order symplectic method. This should not be surprising as the classical Hamiltonian backward error analysis no longer applies due to discontinuity. The 4th-order irreversible (Runge-Kutta based) method artificially dissipates energy, and its solution error changes like the traditional exponential error growth of non-symplectic methods for smooth problems. The 4th-order reversible (symplectic integrator based) method seems to exhibit linear error growth; however, we note a small but definite drift in its energy error, which is not solely oscillatory. We hope that a symplectic counterpart would not have this drift, but its design remains an open problem.

\begin{figure}
	\centering
	\includegraphics[width=0.5\textwidth]{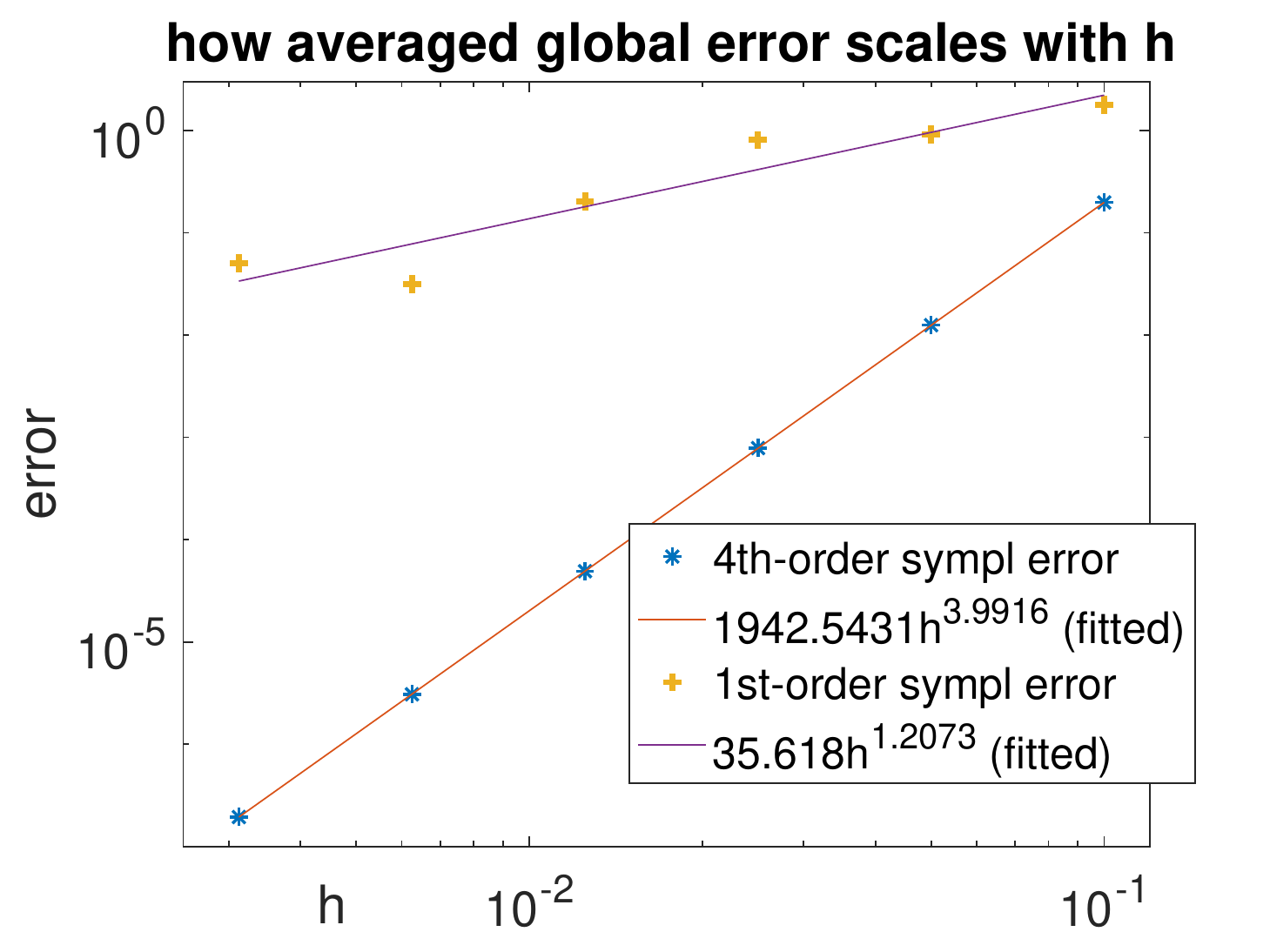}
	\caption{Verifications of orders of 1st-order symplectic Integrator \ref{int_1stOrderSymplectic} and 4th-order reversible Integrator \ref{int_lstOrder} (based on a symplectic reversible smooth integrator). Root-mean-square errors are computed as $\sqrt{1/(T/h+1)\sum_{i=0}^{T/h} \left(q^{num}_i-q^{exact}(ih)\right)^2}$, $T=1000$.}
	\label{fig:quadraticProbConvOrder}
\end{figure}

Fig.\ref{fig:quadraticProbConvOrder} confirms that our 1st-order symplectic integrator and 4th-order reversible integrator are indeed 1st and 4th order (note time points of \emph{impact} are few and therefore negligible after the averaging).

\subsection{Improved accuracy when there is only one linear interface: a nonlinear example}
\label{sec_numerics3rdOrder}

Let's now consider a problem with smooth and jump potentials, respectively,
\[
	U(q)=(q-q_c)^4/12,	\qquad 
	V(q) = \begin{cases}
			\Delta V, \qquad & q>q_{jump} \\
			0, \qquad & q<q_{jump} \\
			\text{undefined}, \qquad & q=q_{jump}
		   \end{cases}.
\]
Due to the nonlinearity created by $U$, no exact solution is available as a benchmark to compare against. However, as there is only one linear interface, the high-order symplectic method in Sec. \ref{sec_3rdOrderSympl} applies.

\begin{figure}
	\centering
	\includegraphics[width=\textwidth]{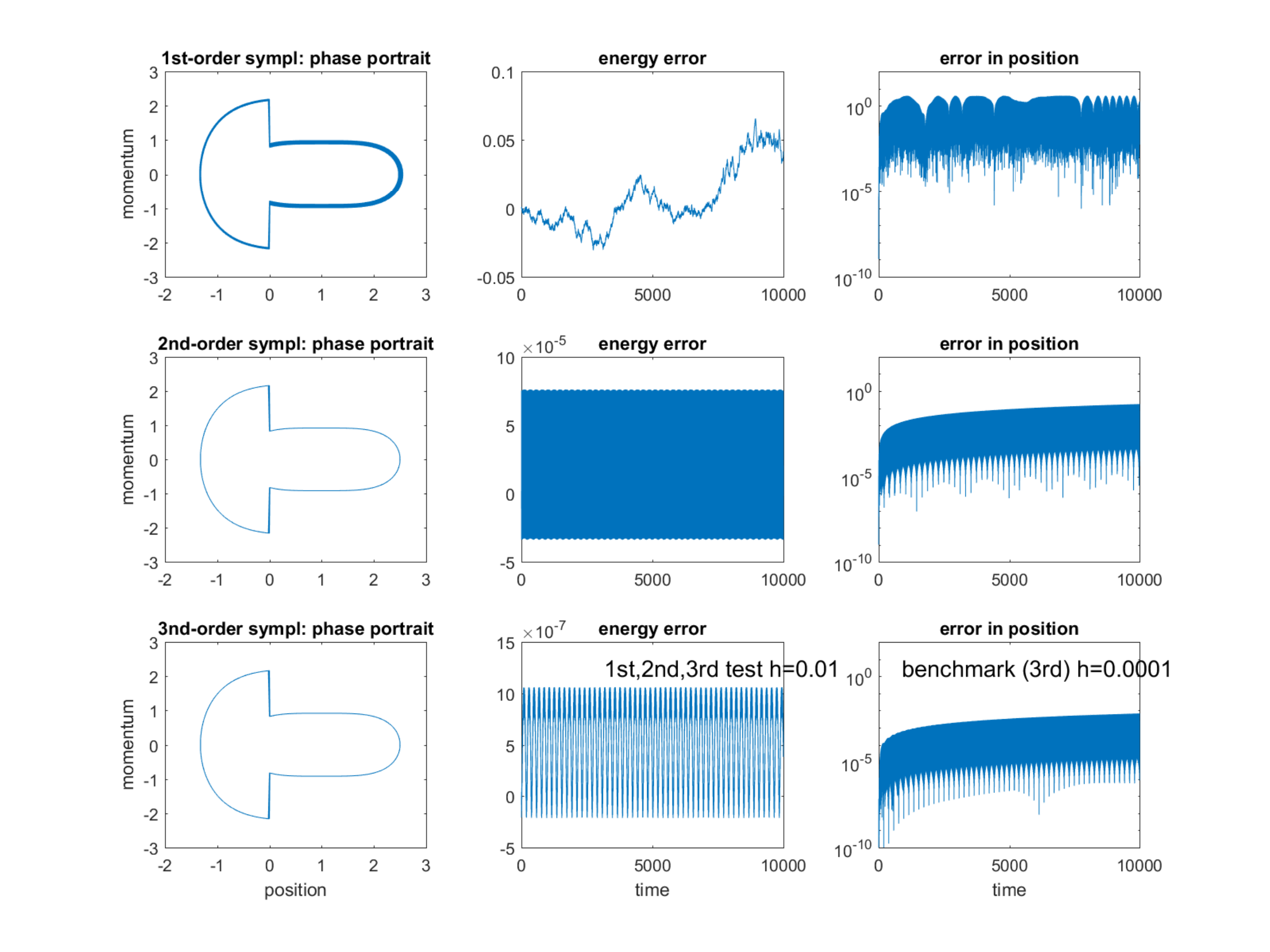}
	\caption{Long time performances of 1st-, 2nd-, and 3rd-order symplectic integrators for a nonlinear problem with one linear interface.}
	\label{fig:quarticLongTime}
\end{figure}

Fig. \ref{fig:quarticLongTime} compares the performances of a 3rd-order Integrator \ref{int_3rdOrderSymplectic}, a 2nd-order integrator (Remark \ref{int_2ndOrderSymplectic}), and a 1st-order Integrator \ref{int_1stOrderSymplectic}, all symplectic. The results indicate consistency with the claimed orders of the methods (Fig. \ref{fig:quarticConvOrder} further confirms this), and the 2nd- and 3rd-order versions have much more regular long time energy behaviors (note: all three are reversible!)

\begin{figure}
	\centering
	\includegraphics[width=0.5\textwidth]{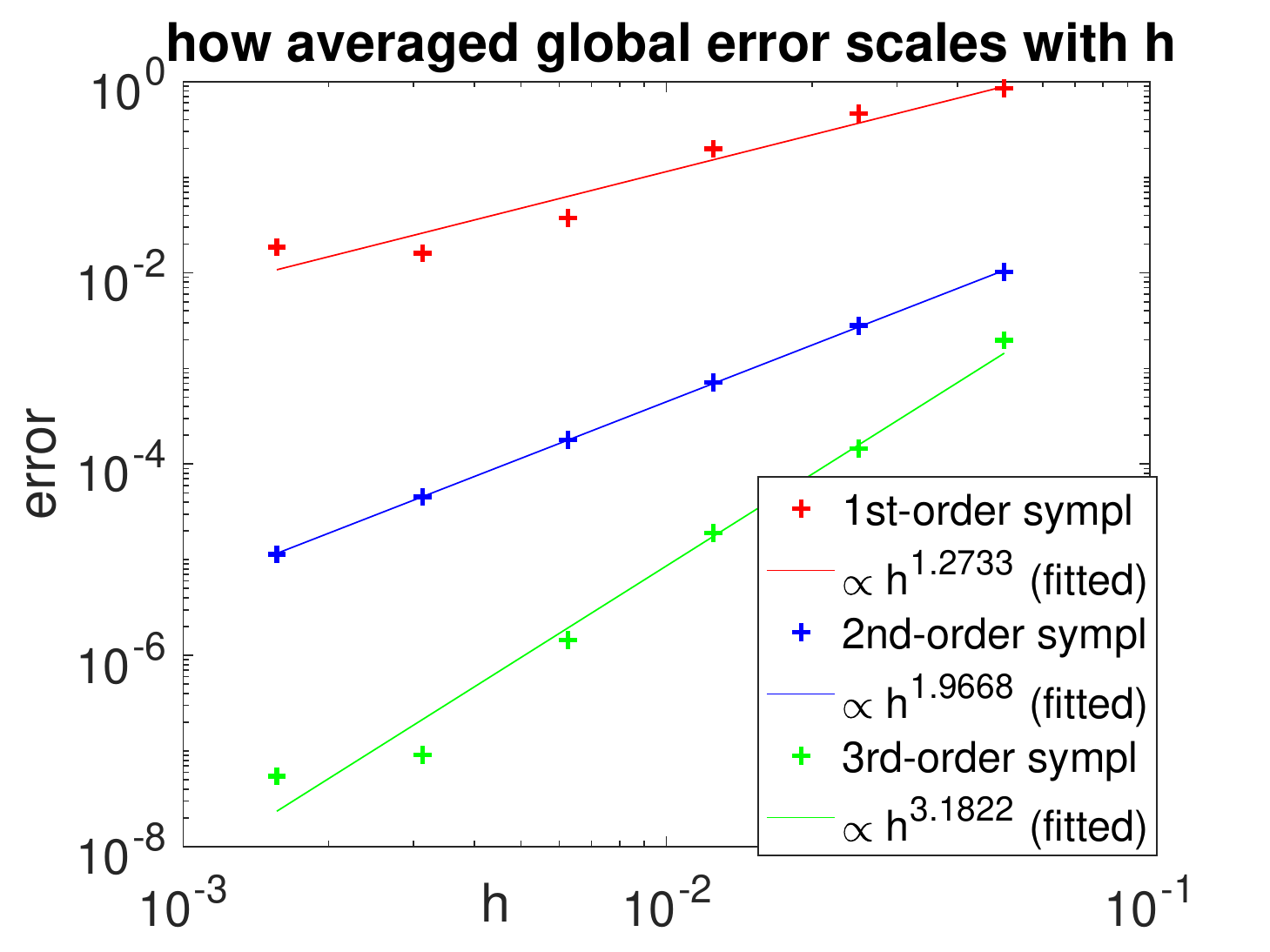}
	\caption{Verifications of the orders of 1st-, 2nd-, and 3rd-order symplectic methods (Integrator \ref{int_1stOrderSymplectic}, that in Rmk. \ref{int_2ndOrderSymplectic}, and Integrator \ref{int_3rdOrderSymplectic}). Averaged global errors are root-mean-square errors, computed as $\sqrt{1/(T/h+1)\sum_{i=0}^{T/h} \left(q^{num}_i-q^{benchmark}(ih)\right)^2}$, $T=100$.}
	\label{fig:quarticConvOrder}
\end{figure}

In these experiments, $\Delta V=2$, $q_{jump}=0$, $q_c=1$. Trajectory errors are computed by comparing against a tiny step-sized ($h=10^{-5}$) 3rd-order (Integrator \ref{int_3rdOrderSymplectic}) simulation.



\subsection{An example in which the interface is given by a level set; conservation of momentum map}
\label{sec_numericsModifiedKepler}


\paragraph{Setup.} We now consider an example in which the discontinuous Hamiltonian has a symmetry. For continuous Hamiltonians, this would imply the conservation of a corresponding momentum map, due to Noether's theorem \cite{Noether:18}. Moreover, a symplectic discretization of the continuous Hamiltonian system can inherit this conservation under nontrivial but reasonable conditions (see \cite{MaWe:01} Chap. 1.3.3 and 1.4.2 for details). Unfortunately, the analogous results for discontinuous Hamiltonians are currently unknown. For our specific example, however, the exact solution would still have a symmetry-based conservation law in addition to energy conservation, and this section investigates whether symplectic Integrator \ref{int_1stOrderSymplectic} numerically captures this conservation too.

This example has 2 degrees of freedom and significant nonlinearity. The smooth potential is gravitational, $U(q)=-1/\|q\|_2$. However, the solution does not follow the classical 1-body dynamics which corresponds to Keplerian orbits, as there is an additional nonsmooth potential
\[
	V(q)=\begin{cases} 
			0, \quad & \|q\|<r_{jump} \\
			\Delta V, \quad & \|q\|>r_{jump}
		\end{cases}.
\]
Obviously here the discontinuous interface is nonlinear, and representable by the zero level set of function $\|q\|-r_{jump}$.

The Hamiltonian $\|p\|_2^2/2+U(q)+V(q)$ is invariant under rotations in the plane, and it is not hard to see its exact solution conserves the angular momentum $L:=p\times q = p_1 q_2 - p_2 q_1$ as \emph{impact} only changes the radial component of $p$. 

Meanwhile, note that although the Hamiltonian is invariant under rotations, its trajectory is not. Even without the jump discontinuity, the solution as a Keplerian orbit is not a circle, unless its initial condition is special enough to lead to a zero eccentricity.

\begin{figure}
	\centering
    \begin{subfigure}[t]{0.49\textwidth}
		\includegraphics[width=\textwidth]{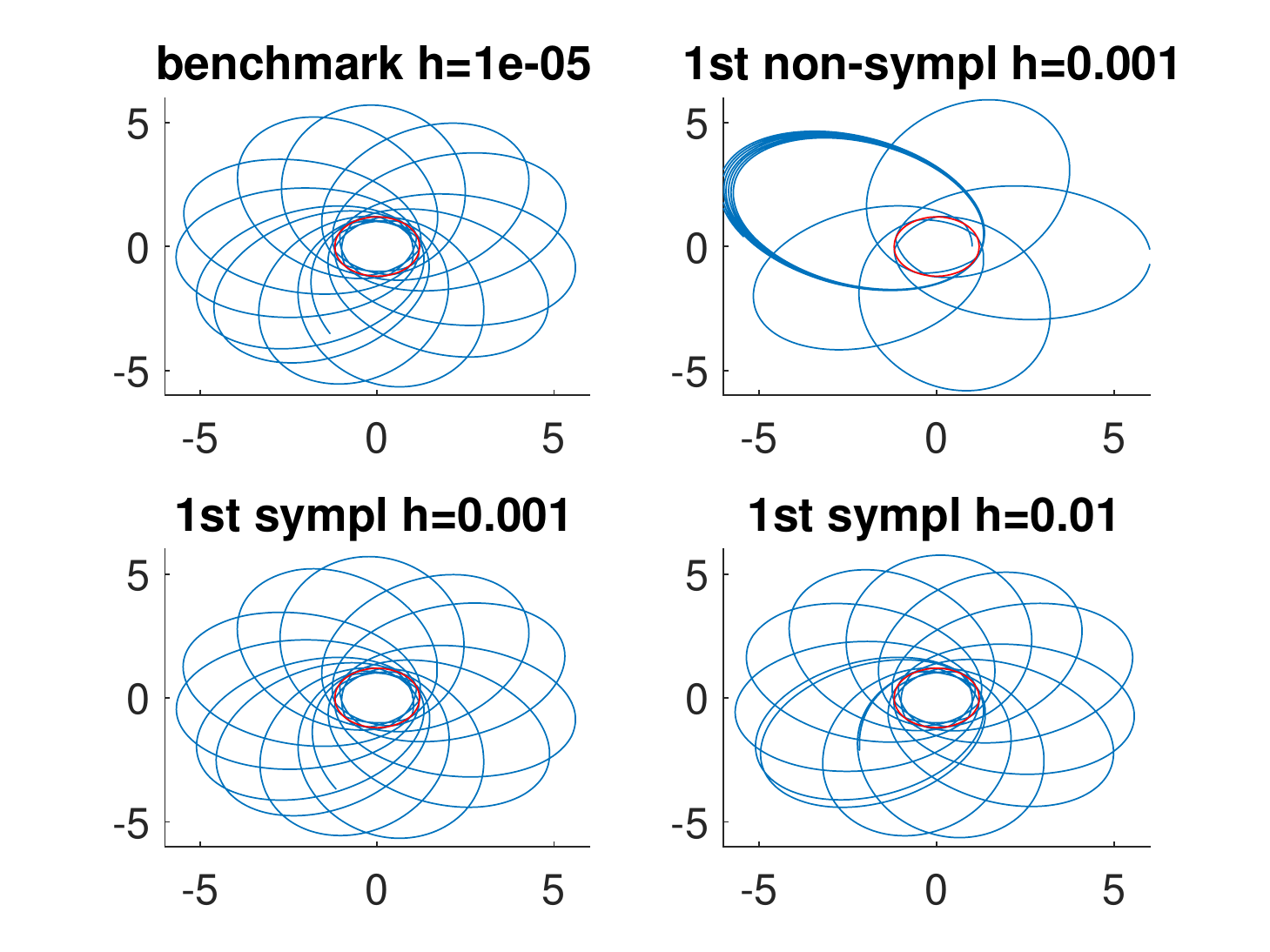}
		\caption{orbit projected to the $[q_1,q_2]$ plane} 
		\label{fig:angularMomentum:orbit}
	\end{subfigure}
    \begin{subfigure}[t]{0.49\textwidth}
		\includegraphics[width=\textwidth]{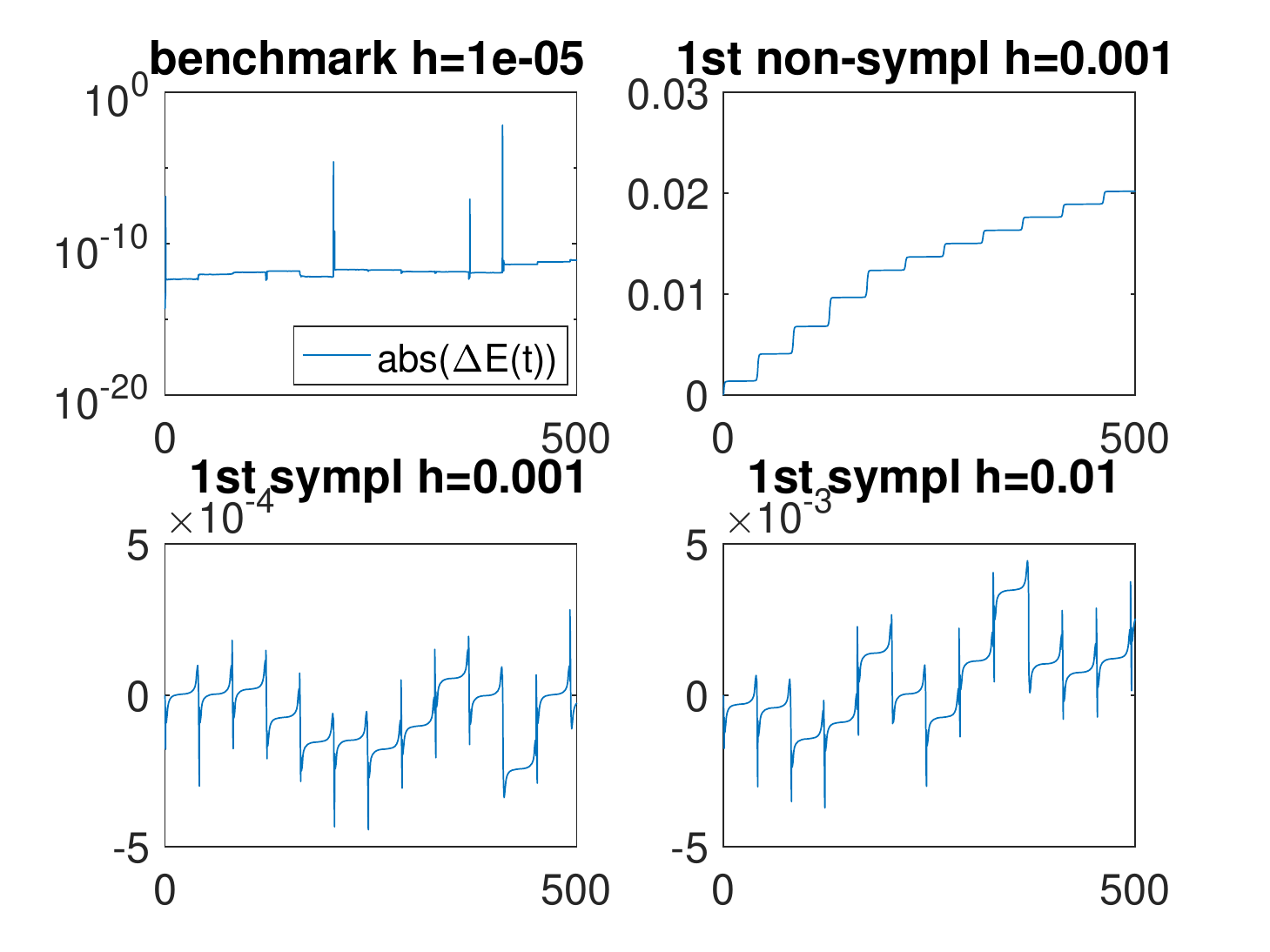}
		\caption{energy error}
		\label{fig:angularMomentum:energy}
	\end{subfigure}
    \begin{subfigure}[t]{0.49\textwidth}
		\includegraphics[width=\textwidth]{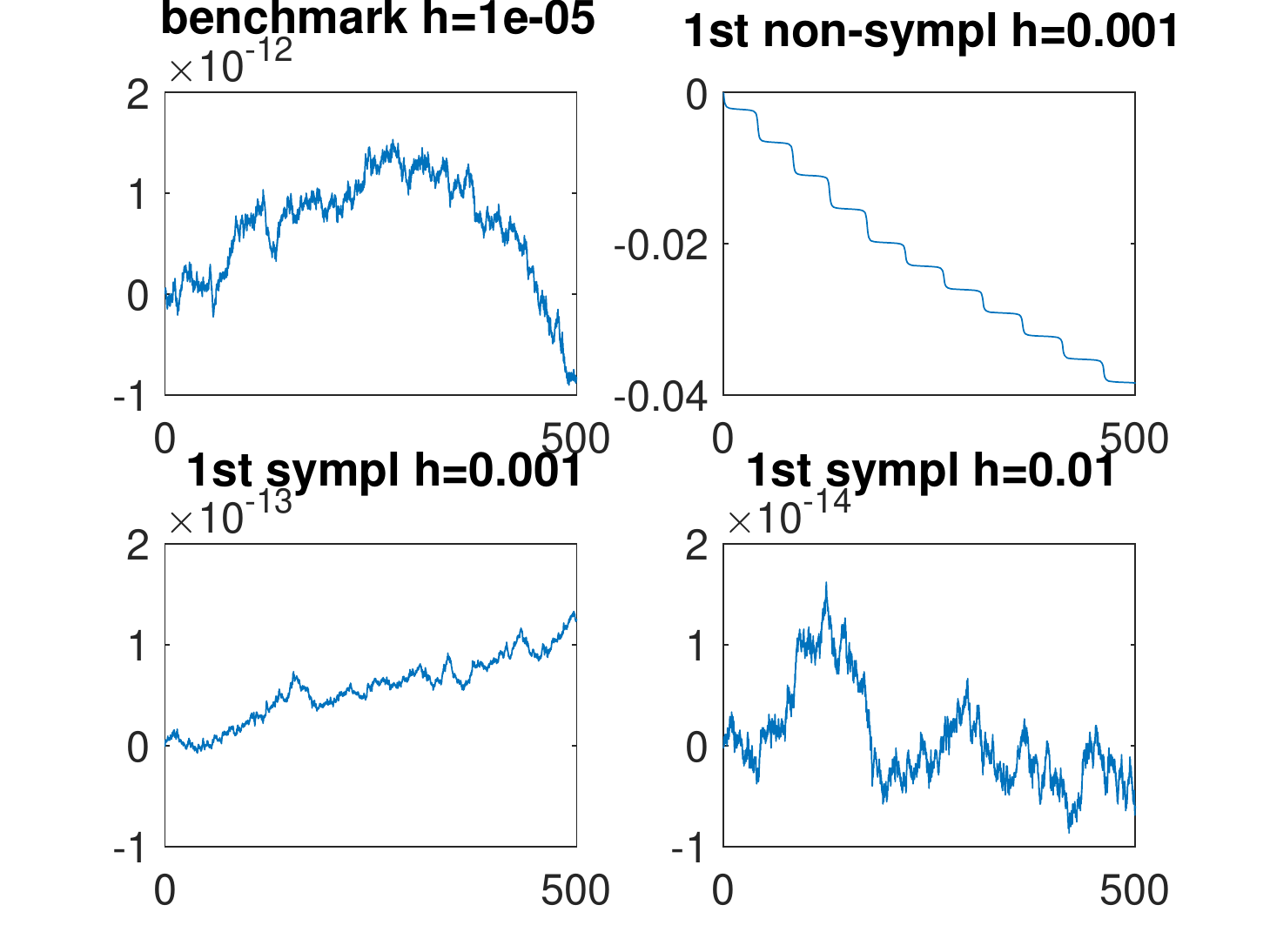}
		\caption{angular momentum error}
		\label{fig:angularMomentum:momentumMap}
	\end{subfigure}
    \begin{subfigure}[t]{0.49\textwidth}
		\includegraphics[width=\textwidth]{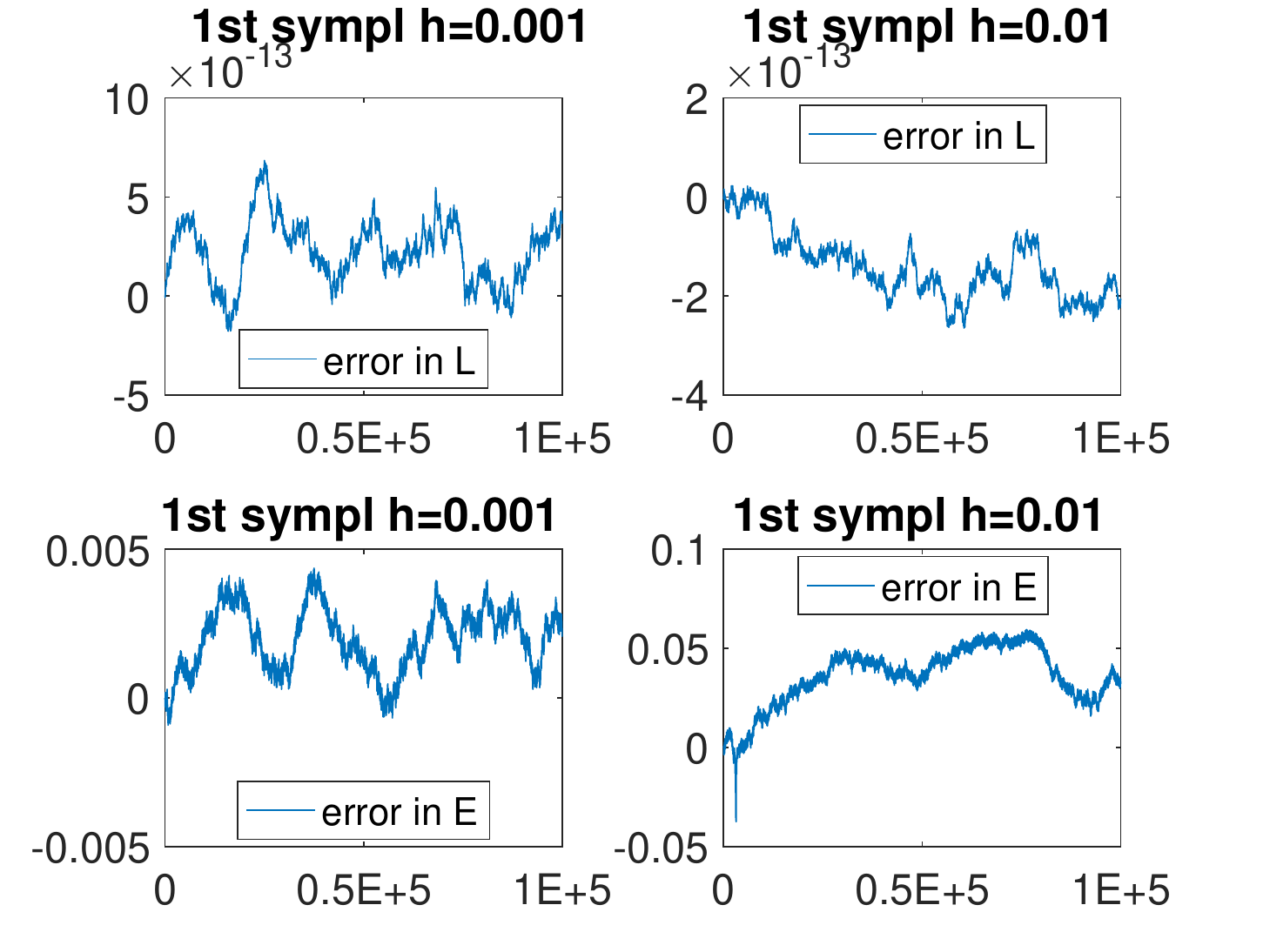}
		\caption{errors in angular momentum L and energy E over super long time; note these are unavailable for the benchmark method, and blowing up for the nonsymplectic method}
		\label{fig:angularMomentum:longTime}
	\end{subfigure}
	\caption{Conservation of angular momentum in the presence of rotational symmetry}
	\label{fig:angularMomentum}
\end{figure}

Fig.\ref{fig:angularMomentum} compares our 1st-order symplectic simulation (by Integrator \ref{int_1stOrderSymplectic}) with a benchmark solution that uses $\sim 100\times$ computational cost, as well as a nonsymplectic version of Integrator \ref{int_1stOrderSymplectic}. $\Delta V=0.125$, $r_{jump}=1.2$, $q(0)=[1,0], p(0)=[0,1.4]$. 

The nonsymplectic version used is simply a forward Euler type, with one $h$-step update given by
\[
	[q,p] \mapsto [q,p]+(\phi_1^h-id)[q,p] + (\phi_2^h-id)[q,p],
\]
where $\phi_1$ is given by \eqref{eq_phi1}, $\phi_2$ is given by (\ref{eq_phi2hittingTime}--\ref{eq_phi2postImpact}), and $id$ is the identity map. For a fair comparison, the symplectic version used here is an irreversible variation (based on Lie-Trotter splitting instead of Strang splitting), with one $h$-step update given by.
\[
	[q,p] \mapsto \phi_1^h \circ \phi_2^h [q,p].
\]

The benchmark solution was generated by fine symplectic simulation of a regularized smooth penalty Hamiltonian
\begin{equation}
	H(q,p)=\|p\|_2^2/2+U(q)+\Delta V \frac{1}{1+\exp\big(-\alpha(\|q\|-r_{jump})\big)}.
	\label{eq_regularizedHamiltonianNumerics3}
\end{equation}
This simulation uses 4th-order symplectic integrator based on triple jump (see e.g., \cite{Hairer06}) with $h=10^{-5}$, with penalty parameter $\alpha=10^5$. Both parameters $\alpha$ and $h$ were tuned to ensure high precision with lowest possible computational cost.

\paragraph{Results.}
Fig. \ref{fig:angularMomentum:orbit} illustrates the orbits and our method agrees well with the benchmark but uses a $100\times$ larger stepsize, and that even if it uses a $1000\times$ larger stepsize, the long time error is still moderate (mainly due to accumulated phase error). For the purpose of visualizing the orbit, the simulation time is chosen to be $T=500$, which is relatively long, as one can see a nonsymplectic method gradually loses its energy and the particle eventually drops to an orbit with a large semi-major axis which no longer crosses the interface.

Fig. \ref{fig:angularMomentum:energy} and \ref{fig:angularMomentum:momentumMap} respectively plot how the energy and angular momentum, computed from the numerical solutions, deviates from their true values in time dependent ways. As expected, (i) the 1st-order symplectic method exhibits $\mathcal{O}(h)$ fluctuation in energy, while a nonsymplectic version accumulates error in energy; (ii) angular momentum is numerically conserved; the small error is due to limited machine precision, and $h=0.001$ gives more error than $h=0.01$ because it uses $10\times$ more steps, each of which induces a small arithmetic error. Fig. \ref{fig:angularMomentum:longTime} confirms that these deviations are truly bounded over super long time ($T=100,000$).

\subsection{Sauteed Mushroom: irregular interface geometry and complex dynamics (trapped or ergodic?)} 
\label{sec_numericsMushroom}

Finally, we demonstrate the capability of the proposed approach using an example where both the interface and the corresponding dynamics are complicated. Among all methods mentioned in this paper, only the adaptive Integrator \ref{int_adaptive} suits the investigation of the ergodic aspect of the dynamics, which requires accurate and affordable long-time simulation, because it can be $\geq 4$th-order and capable of capturing multiple \emph{impact}s in a short duration while still using a large step size. Note the purpose of this section changed a little bit, as we are shifting from demonstrating the correctness of the proposed method to using it as a tool that, {\it for the first time}, allows us to probe some hard problems and make conjectures.

More precisely, let's study a system that complicates the Bunimovich Mushroom, which is a classical example of Hamiltonian systems in divided phase space that `demonstrates a continuous transition from a completely chaotic system (stadium) to a completely integrable one (circle)' \cite{bunimovich2001mushrooms}. The specific mushroom we consider is a subset of $\mathbb{R}^2$, defined as
\[
	\mathcal{M} = \{(x,y) \mid x^2+y^2 \leq 2, y \geq 0\} \cup \{(x,y) \mid |x|\leq 1,|y|\leq 1, y\leq 0\}
\]
In the language of this paper, the classical Bunimovich Mushroom considers the discontinuous Hamiltonian dynamics of a particle, with initial condition inside $\mathcal{M}$, without any smooth potential (i.e., $U(q)=0$), and an infinite potential barrier at the mushroom boundary (i.e., $V(q)=0$ if $q\in \mathcal{M}^\circ$, $V(q)=+\infty$ if $q\in \mathcal{M}^c$, and undefined otherwise). The particle basically travels in straight line at constant speed until hitting the boundary, and then be reflected and travels as a free particle again until the next reflection, and the whole procedure repeats. Note the reflections can be arbitrarily frequent due to sharp corners (and hence our choice of an adaptive integrator). Among many beautiful results, one was the demonstration of that the phase space splits an integrable island and a chaotic sea \cite{bunimovich2001mushrooms}, and initial conditions in one region will not be able to percolate into the other region.

New to this paper is the addition of a nontrivial smooth potential. We are interested in how it could change the global dynamics. Specifically, consider the aforementioned jump potential $V$ and a smooth potential $U(q)=a((q_1-q^s_1)^4+(q_2-q^s_2)^4)/4$, where $a$ and vector $q^s$ are constant parameters, corresponding to a vectorial anharmonic attraction to $q^s$. 

In all experiments presented here, the sautee source is fixed at $q_s=[-0.5;-2]$, i.e., the left bottom of the mushroom. The initial condition is fixed as $q(0)=[1.5;0.2]$, $p(0)=[0;1]$, which is in the regular island of the classical mushroom (i.e., $a=0$).

\begin{figure}
	\centering
    \begin{subfigure}[t]{0.32\textwidth}
		\includegraphics[width=\textwidth]{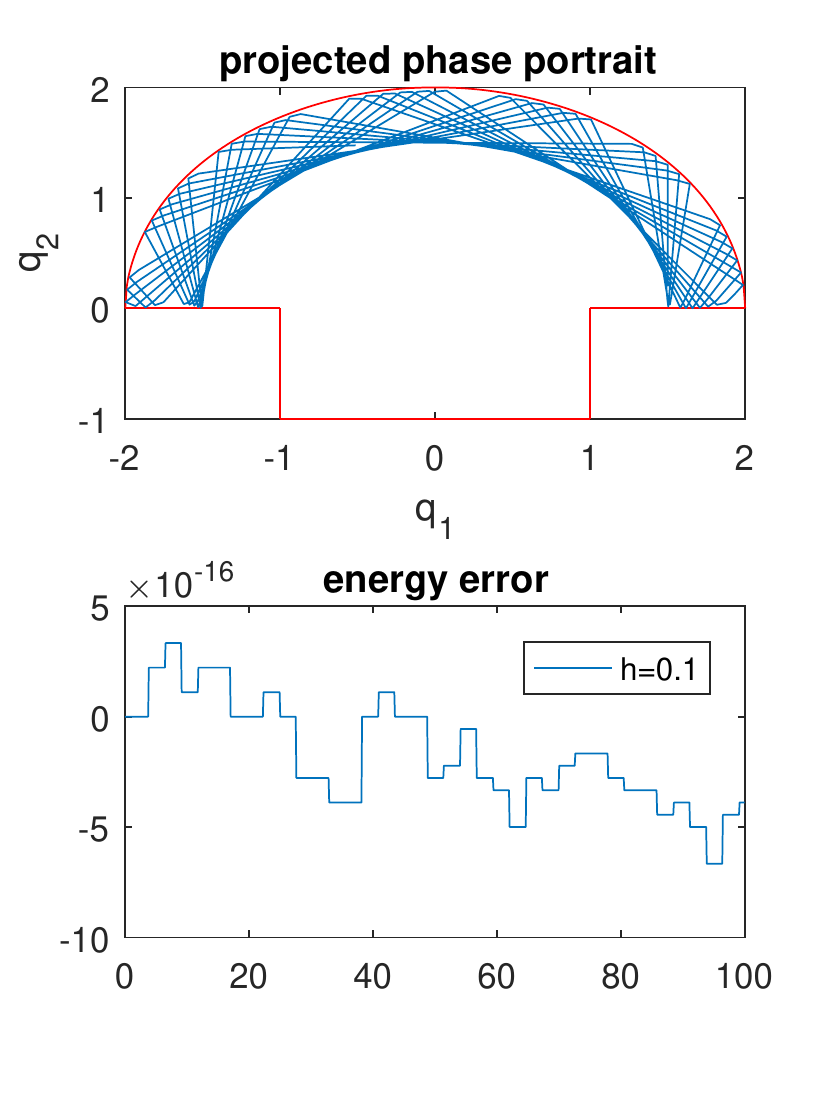}
		\caption{$a=0$, medium time} 
		\label{fig:mushroom:a0}
	\end{subfigure}
    \begin{subfigure}[t]{0.32\textwidth}
		\includegraphics[width=\textwidth]{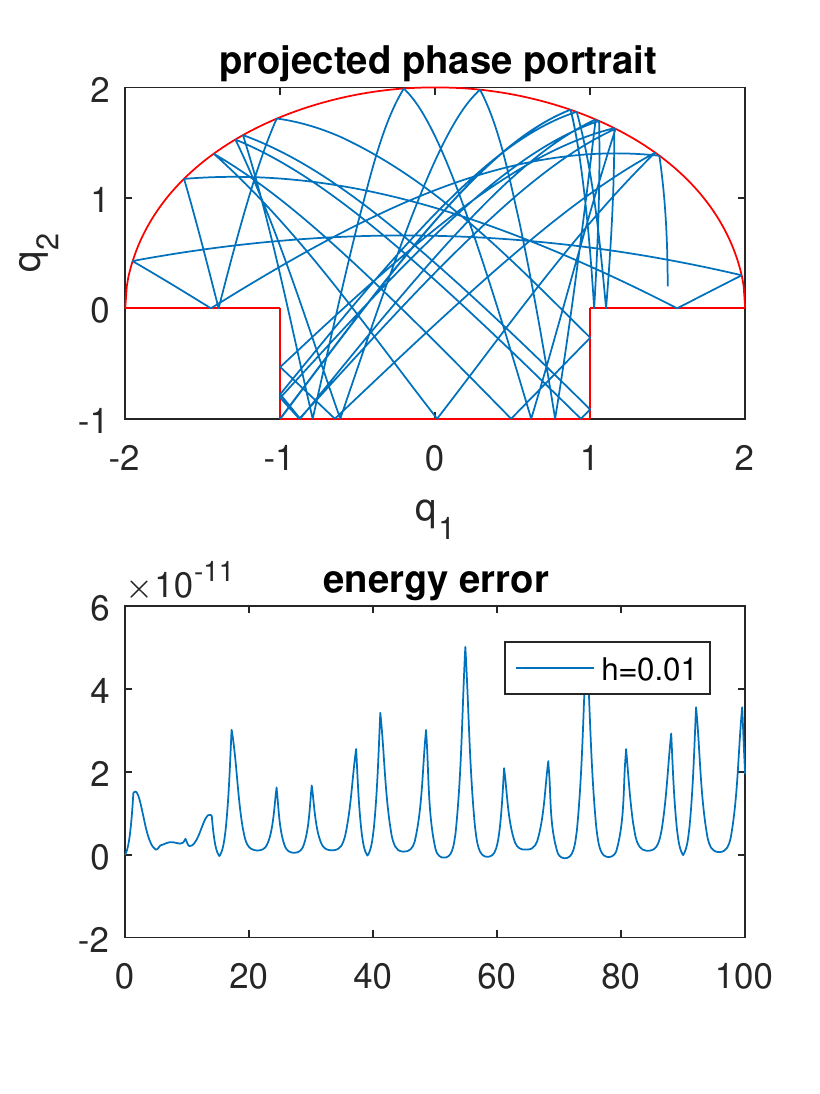}
		\caption{$a=0.008$, medium time}
	\end{subfigure}
    \begin{subfigure}[t]{0.32\textwidth}
		\includegraphics[width=\textwidth]{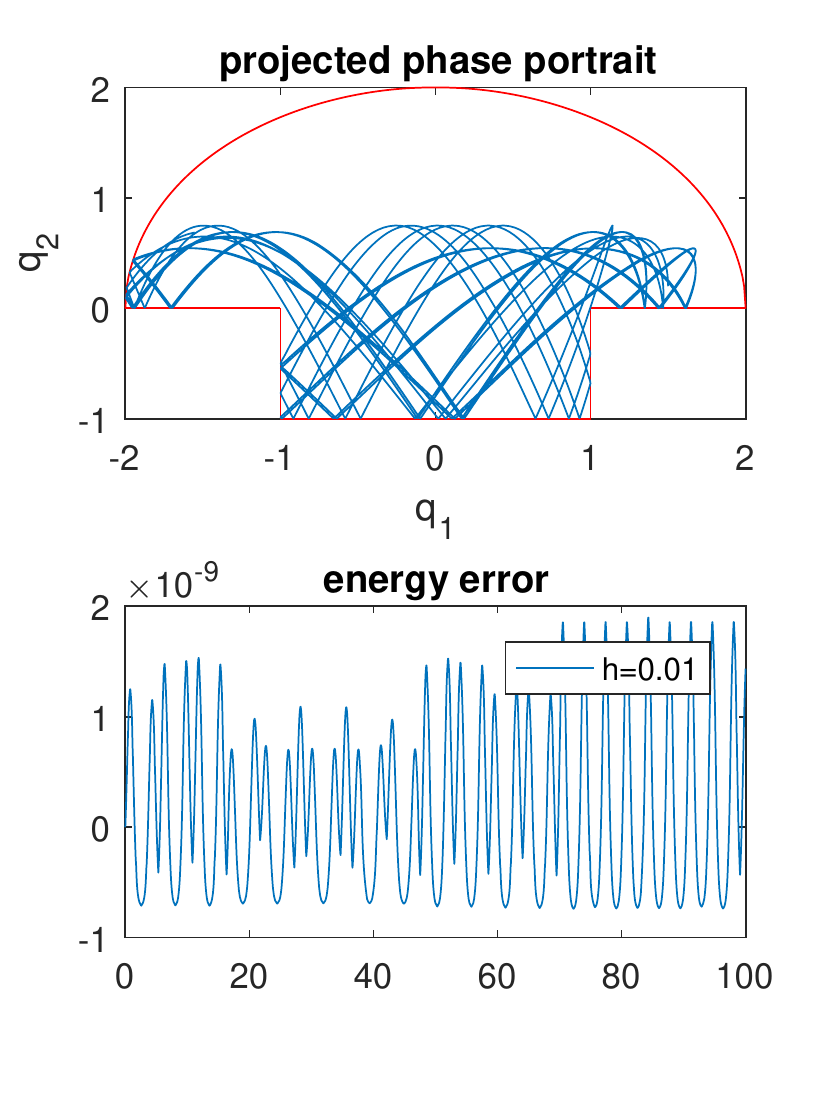}
		\caption{$a=0.08$, medium time} 
	\end{subfigure}
    \begin{subfigure}[t]{0.32\textwidth}
		\includegraphics[width=\textwidth]{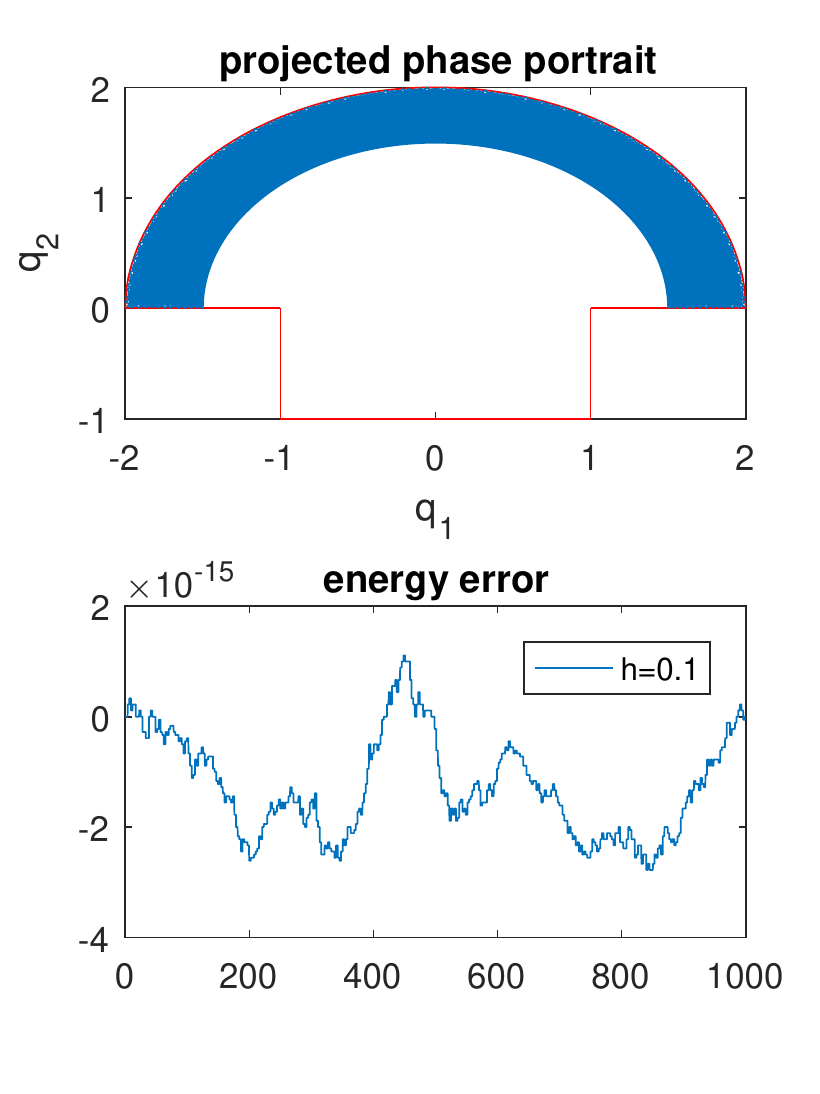}
		\caption{$a=0$, long time}
	\end{subfigure}
    \begin{subfigure}[t]{0.32\textwidth}
		\includegraphics[width=\textwidth]{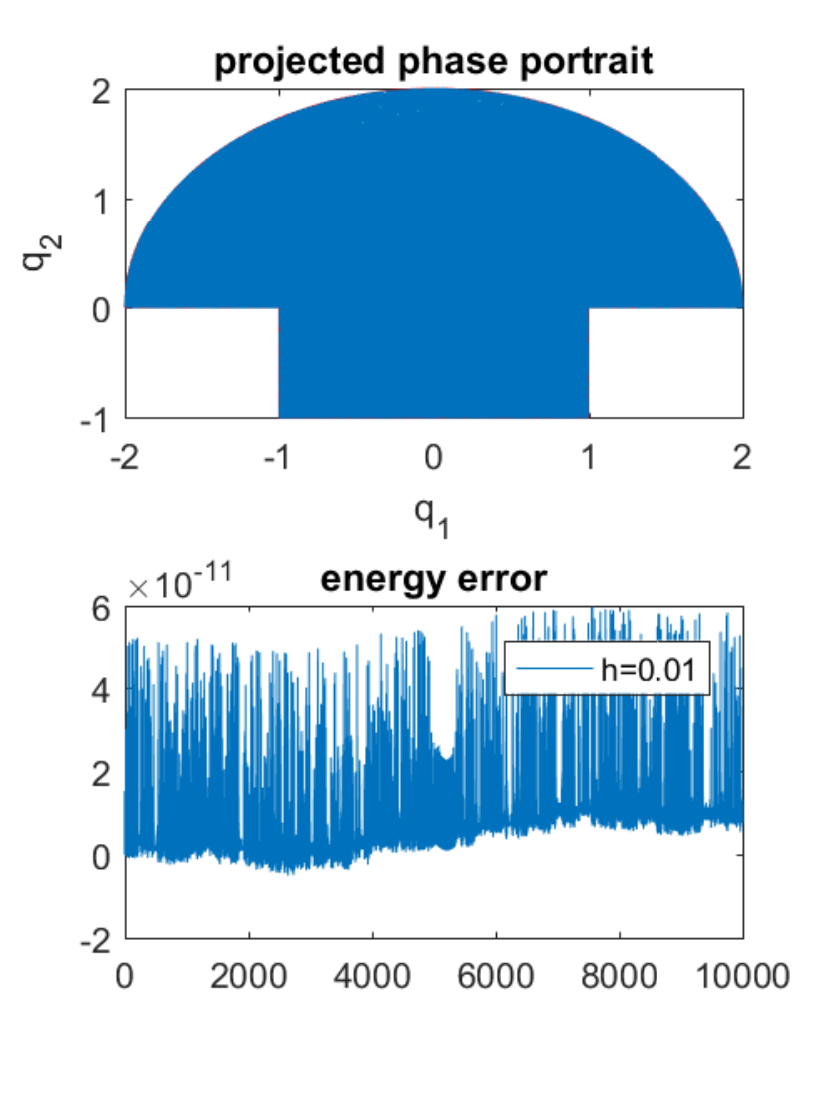}
		\caption{$a=0.008$, long time}
	\end{subfigure}
    \begin{subfigure}[t]{0.32\textwidth}
		\includegraphics[width=\textwidth]{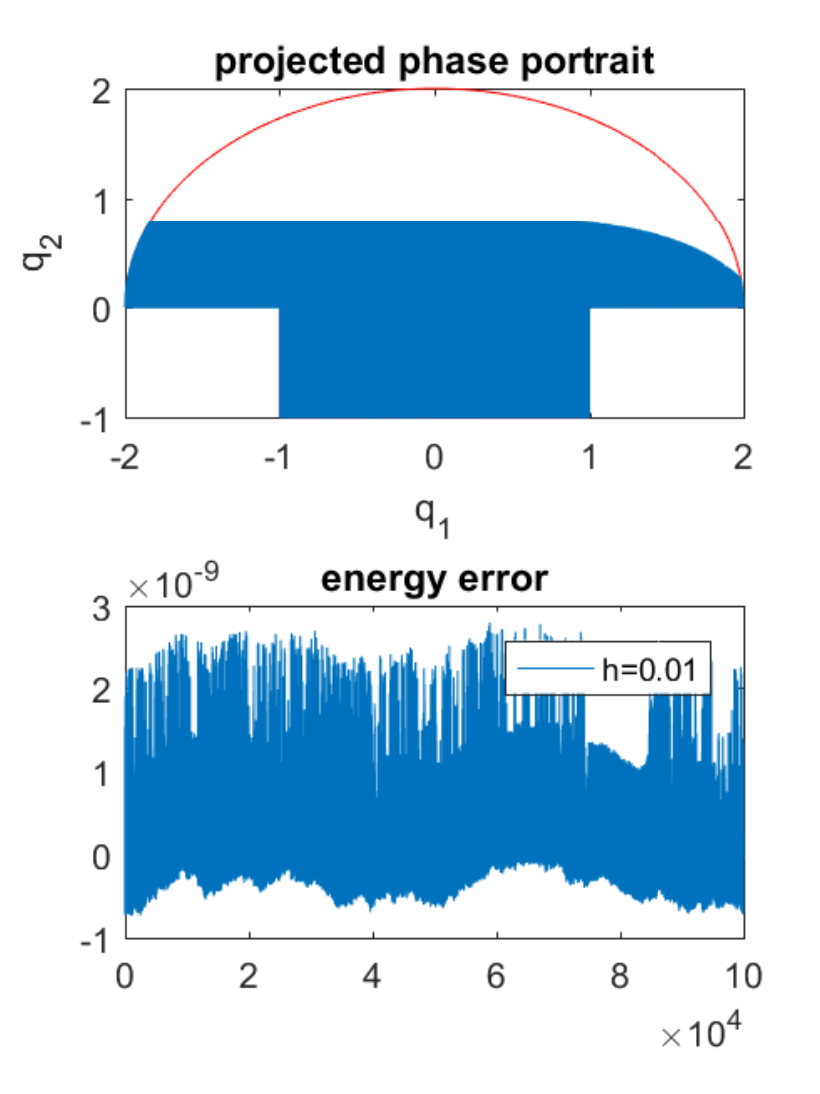}
		\caption{$a=0.08$, long time}
	\end{subfigure}
	\caption{Sauteed mushroom: switching between trapped and ergodic dynamics controlled by the sautee parameter $a$.}
	\label{fig:mushroom}
\end{figure}

Fig.\ref{fig:mushroom} shows distinct dynamics for different values of $a$ (short time simulations were provided in addition to long time ones for visualizing the dynamics). The same initial condition is used for all $a$ values. Although this initial condition corresponds to a regular island in the classical Bunimovich mushroom (Fig.\ref{fig:mushroom:a0}), when $a=0.008$ the dynamics appears to be chaotic and ergodic on the entire mushroom. When $a$ takes a larger value of $0.08$, however, it seems the dynamics is no longer ergodic any more, although still possibly chaotic, and the trajectory remains trapped in part of the mushroom.

Of course, these observations depend on the choice of the sautee source $q^s$ too.

Here both the legacy code $\psi$ used by Integrator \ref{int_adaptive} and  the integrator in the bisection method (Sec.\ref{sec_timeToImpact}) for estimating the time to \emph{impact} are the 4th-order symplectic integrator based on triple jump.


\section{Discussion and conclusion}
The accurate and efficient simulation of Hamiltonian mechanical systems with discontinuous potentials is an important problem. In fact, a special case, namely `impact/collision/contact integrators' for potentials with infinitely-high discontinuous barrier(s), has been extensively studied due to extensive applications in engineering and sciences.

The general case where jumps can be finite, however, appears to be insufficiently studied yet. To that end, this article,
along the line of \cite{Jin-Wen1, JinReview, JinWuHuang} in which the particle reflection and refraction at the interface are built into the dynamics, proposes four numerical methods, each with distinct applicability. As for general problems, the first method that we recommend to try (among the four plus the penalty method) is the adaptive high-order Integrator \ref{int_adaptive}. This is because of its robustness to complex interface geometry, together with the fact that whether/how symplecticity benefits long time accuracy is no longer clear (yet) in the discontinuous setting. This integrator already has, at least empirically, pleasant long time behaviors, and is computationally rather efficient too.

Several questions remain open. For example, (i) How to construct high-order symplectic integrators for general discontinuous potentials? Although we did obtain a 1st-order version for general problems, severe order-reduction from the classical continuous theory is encountered, and it is unclear if there is an order barrier or it is just that a higher-order explicit version remains to be developed. (ii) What would be the advantage(s) of having a symplectic method? Backward error analysis, if still applicable, needs to be completely revamped, and this includes both the modified equation/Hamiltonian theory and the error propagation analysis.

Moreover, the rich field of `impact/collision/contact integrators' has already developed a number of brilliant ideas, and we think many of them can be extrapolated to the more general setting in this article. For example, stabilization techniques may lead to further improved long term behaviors. Such (and more) explorations will be left as future work.

Other applications and extensions of these methods include geometrical optics, where waves can be {\it partially} 
transmitted and reflected \cite{Jin-Wen3} at interfaces, high frequency elastic  waves through interfaces \cite{Jin-Liao}, surface
hopping problems \cite{JinQiZhang} in computational chemistry, and quantum-classical couping algorithms \cite{JinNovak1, JinNovak2}. 

A side remark is that the sauteed mushroom (Sec.\ref{sec_numericsMushroom}) is definitely under-investigated in this article from a dynamical system perspective, but we hope it could demonstrate the applicability of our numerical integrator, and provoke thinking about its global dynamics and bifurcation in the future.

\section{Acknowledgment}
MT is thankful for the partial support by NSF DMS-1847802 and ECCS-1936776. SJ was supported by NSFC grant No. 12031013 and by the Strategic Priority Research 
Program of Chinese Academy of Sciences Grant No. XDA25010404.

\newpage
\section{Appendix}
\subsection{Why does Strang splitting no longer produce a 2nd-order method}
\label{appd_order1stOrderSymplectic}
This section will give an example for which Integrator \ref{int_1stOrderSymplectic} does not have a 3rd-order local truncation error, even though it is a time-reversible method constructed via symmetric Strang splitting (which is guaranteed to have a 3rd-order truncation error in the smooth case).

Consider the quadratic problem given in Section \ref{sec_quadraticPotentialSln} and denote by $q,p$ the current position and momentum. Assume $q=q_\text{jump}-C h$ for some bounded constant $C>0$, and $p>0$ is sufficiently large, so that an \emph{impact} will happen in $h$-time and the interface crossing will be a refraction. In this case, the exact solution after $h$-time, $Q,P$, is given by
\begin{align*}
	\hat{p} &= \sqrt{\omega^2 (q-q_\text{off})^2 + p^2 - \omega^2 (q_\text{jump}-q_\text{off})^2} \\
	t &= \big( 2\pi - \text{atan2}(\hat{p}/\omega, q_\text{jump}-q_\text{off}) + \text{atan2}(p/\omega, q-q_\text{off}) \big) / \omega \\
	\bar{p} &= \sqrt{\omega^2 (q-q_\text{off})^2 + p^2 - 2\Delta V - \omega^2 (q_\text{jump}-q_\text{off})^2} \\
   	Q &= q_\text{off}+\cos(\omega (h-t))(q_\text{jump}-q_\text{off})+\sin(\omega (h-t))\bar{p}/\omega \\
   	P &= -\omega\sin(\omega (h-t))(q_\text{jump}-q_\text{off})+\cos(\omega (h-t)) \bar{p}.
\end{align*}
The numerical solution produced by Integrator \ref{int_1stOrderSymplectic}, denoted by $Q_1,P_1$, is given by
\begin{align*}
	\hat{p}_1 &= p - h \omega^2/2 (q-q_\text{off}) \\
	\tau &= (q_\text{jump}-q)/\hat{p}_1 \\
	\hat{p}_2 &= \sqrt{\hat{p}_1^2 - 2\Delta V} \\
	Q_1 &= q_\text{jump}+(h-\tau) \hat{p}_2 \\
	P_1 &= \hat{p}_2 - h\omega^2/2 (Q_1-q_\text{off})
\end{align*}

\paragraph{Position.} Its truncation error is only 2nd-order. More precisely,
we check how well $Q_1$ approximates $Q$ by letting
\[
	a_0 = \lim_{h\rightarrow 0} (Q-Q_1), \quad a_1 = \lim_{h\rightarrow 0} \frac{Q-Q_1-a_0}{h}, \quad a_2 = \lim_{h\rightarrow 0} \frac{Q-Q_1-a_0-a_1 h}{h^2}.
\]
Laborious algebra will show that
\[
	a_0 = 0, \quad a_1 = 0, \quad a_2 = \frac{2 C \Delta V+ (C p-p^2)\left(p-\sqrt{p^2-2\Delta V}\right)}{2p^3\sqrt{p^2-2\Delta V}} (C-p) (q_\text{off}-q_\text{jump}) \omega^2,
\]
which means $Q=Q_1+\mathcal{O}(h^2)$. However, if $\Delta V=0$, it can be checked that $a_2=0$, which means the truncation error returns to be 3rd-order, and that is consistent with the fact that the integrator should be 2nd-order in the smooth case.

\paragraph{Momentum.} Its truncation error is only 1st-order. More precisely,
we check how well $P_1$ approximates $P$ by letting
\[
	b_0 = \lim_{h\rightarrow 0} (P-P_1), \quad b_1 = \lim_{h\rightarrow 0} \frac{P-P_1-b_0}{h}, \quad b_2 = \lim_{h\rightarrow 0} \frac{P-P_1-b_0-b_1 h}{h^2}.
\]
Laborious algebra will show that
\begin{align*}
	& b_0 = 0, \quad b_1 = \frac{p-\sqrt{p^2-2\Delta V}}{2p\sqrt{p^2-2\Delta V}} (2C-p) (q_\text{off}-q_\text{jump}) \omega^2, \\
	& b_2 = \frac{\omega^2}{4 p^3 \alpha^4} \left(-2 C^2 \left(4 \Delta V^2 \left(p \alpha-\beta\right)-2 \Delta V p^2 \left(p \alpha-2 \beta\right)+p^3 \left(\alpha-p\right) \beta\right)-4 C \Delta V p^2 \alpha^3 +\Delta V p^3 \alpha \beta\right),
\end{align*}
where $\alpha=\sqrt{p^2-2 \Delta V}$ and $\beta=\omega ^2 (q_\text{jump}-q_\text{off})^2$.

This means $P=P_1+\mathcal{O}(h)$. However, if $\Delta V=0$, it can be checked that $b_1=0$ and $b_2=0$, which means the truncation error returns to be 3rd-order. That is again consistent with the 2nd-order nature in the smooth case.

\bibliographystyle{siam}
\bibliography{molei27}

\def\cprime{$'$} \def\cprime{$'$} \def\cprime{$'$}
\begin{thebibliography}{10}

\bibitem{aarseth1963dynamical}
{\sc S.~J. Aarseth and F.~Hoyle}, {\em Dynamical evolution of clusters of
  galaxies, i}, Monthly Notices of the Royal Astronomical Society, 126 (1963),
  pp.~223--255.

\bibitem{Ambrosio}
{\sc L.~Ambrosio}, {\em Transport equation and cauchy problem for bv vector
  fields}, Inventiones mathematicae, 158 (2004), pp.~227--260.

\bibitem{benettin1994hamiltonian}
{\sc G.~Benettin and A.~Giorgilli}, {\em On the hamiltonian interpolation of
  near-to-the identity symplectic mappings with application to symplectic
  integration algorithms}, Journal of Statistical Physics, 74 (1994),
  pp.~1117--1143.

\bibitem{blanes2017concise}
{\sc S.~Blanes and F.~Casas}, {\em A concise introduction to geometric
  numerical integration}, CRC press, 2017.

\bibitem{blanes2013optimized}
{\sc S.~Blanes, F.~Casas, P.~Chartier, and A.~Murua}, {\em Optimized high-order
  splitting methods for some classes of parabolic equations}, Mathematics of
  Computation, 82 (2013), pp.~1559--1576.

\bibitem{blanes2013new}
{\sc S.~Blanes, F.~Casas, A.~Farres, J.~Laskar, J.~Makazaga, and A.~Murua},
  {\em New families of symplectic splitting methods for numerical integration
  in dynamical astronomy}, Applied Numerical Mathematics, 68 (2013),
  pp.~58--72.

\bibitem{bond2008stabilized}
{\sc S.~D. Bond and B.~J. Leimkuhler}, {\em Stabilized integration of
  hamiltonian systems with hard-sphere inequality constraints}, SIAM Journal on
  Scientific Computing, 30 (2008), pp.~134--147.

\bibitem{MR1436164}
{\sc F.~A. Bornemann and C.~Sch{\"u}tte}, {\em Homogenization of {H}amiltonian
  systems with a strong constraining potential}, Phys. D, 102 (1997),
  pp.~57--77.

\bibitem{bunimovich2001mushrooms}
{\sc L.~A. Bunimovich}, {\em Mushrooms and other billiards with divided phase
  space}, Chaos: An Interdisciplinary Journal of Nonlinear Science, 11 (2001),
  pp.~802--808.

\bibitem{calvo1995accurate}
{\sc M.-P. Calvo and E.~Hairer}, {\em Accurate long-term integration of
  dynamical systems}, Appl. Numer. Math., 18 (1995), pp.~95--105.

\bibitem{calvo1993development}
{\sc M.~P. Calvo and J.~Sanz-Serna}, {\em The development of variable-step
  symplectic integrators, with application to the two-body problem}, SIAM
  Journal on Scientific Computing, 14 (1993), pp.~936--952.

\bibitem{castella2009splitting}
{\sc F.~Castella, P.~Chartier, S.~Descombes, and G.~Vilmart}, {\em Splitting
  methods with complex times for parabolic equations}, BIT Numerical
  Mathematics, 49 (2009), pp.~487--508.

\bibitem{cirak2005decomposition}
{\sc F.~Cirak and M.~West}, {\em Decomposition contact response (dcr) for
  explicit finite element dynamics}, International Journal for Numerical
  Methods in Engineering, 64 (2005), pp.~1078--1110.

\bibitem{MR2275175}
{\sc D.~Cohen, T.~Jahnke, K.~Lorenz, and C.~Lubich}, {\em Numerical integrators
  for highly oscillatory {H}amiltonian systems: a review}, in Analysis,
  modeling and simulation of multiscale problems, Springer, Berlin, 2006,
  pp.~553--576.

\bibitem{creutz1989higher}
{\sc M.~Creutz and A.~Gocksch}, {\em Higher-order hybrid {M}onte {C}arlo
  algorithms}, Physical Review Letters, 63 (1989), p.~9.

\bibitem{deuflhard2008contact}
{\sc P.~Deuflhard, R.~Krause, and S.~Ertel}, {\em A contact-stabilized newmark
  method for dynamical contact problems}, International Journal for Numerical
  Methods in Engineering, 73 (2008), pp.~1274--1290.

\bibitem{dharmaraja2012time}
{\sc S.~Dharmaraja, H.~Kesari, E.~Darve, and A.~J. Lew}, {\em Time integrators
  based on approximate discontinuous hamiltonians}, International journal for
  numerical methods in engineering, 89 (2012), pp.~71--104.

\bibitem{dieci2009sliding}
{\sc L.~Dieci and L.~Lopez}, {\em Sliding motion in filippov differential
  systems: theoretical results and a computational approach}, SIAM Journal on
  Numerical Analysis, 47 (2009), pp.~2023--2051.

\bibitem{dijkstra2002phase}
{\sc M.~Dijkstra}, {\em Phase behavior of hard spheres with a short-range
  yukawa attraction}, Physical Review E, 66 (2002), p.~021402.

\bibitem{DiPernaLions}
{\sc R.~J. DiPerna and P.-L. Lions}, {\em Ordinary differential equations,
  transport theory and sobolev spaces}, Inventiones mathematicae, 98 (1989),
  pp.~511--547.

\bibitem{doyen2011time}
{\sc D.~Doyen, A.~Ern, and S.~Piperno}, {\em Time-integration schemes for the
  finite element dynamic {S}ignorini problem}, SIAM Journal on Scientific
  Computing, 33 (2011), pp.~223--249.

\bibitem{feng1986difference}
{\sc K.~Feng}, {\em Difference schemes for {H}amiltonian formalism and
  symplectic geometry}, J. Comput. Math., 4 (1986), pp.~279--289.

\bibitem{feng2010symplectic}
{\sc K.~Feng and M.~Qin}, {\em Symplectic geometric algorithms for Hamiltonian
  systems}, Springer, 2010.

\bibitem{fetecau2003nonsmooth}
{\sc R.~C. Fetecau, J.~E. Marsden, M.~Ortiz, and M.~West}, {\em Nonsmooth
  lagrangian mechanics and variational collision integrators}, SIAM Journal on
  Applied Dynamical Systems, 2 (2003), pp.~381--416.

\bibitem{filippov2013differential}
{\sc A.~F. Filippov}, {\em Differential equations with discontinuous righthand
  sides: control systems}, vol.~18, Springer Science \& Business Media, 2013.

\bibitem{forest1989canonical}
{\sc E.~Forest}, {\em Canonical integrators as tracking codes}, in AIP Conf.
  Proc., vol.~184, AIP Publishing, 1989, pp.~1106--1136.

\bibitem{fridman2002slow}
{\sc L.~M. Fridman}, {\em Slow periodic motions with internal sliding modes in
  variable structure systems}, International Journal of Control, 75 (2002),
  pp.~524--537.

\bibitem{Skeel:99}
{\sc B.~Garc\'{i}a-Archilla, J.~M. Sanz-Serna, and R.~D. Skeel}, {\em
  Long-time-step methods for oscillatory differential equations}, SIAM J. Sci.
  Comput., 20 (1999), pp.~930--963.

\bibitem{gerber1996global}
{\sc R.~A. Gerber}, {\em Global effects of softening n-body galaxies}, The
  Astrophysical Journal, 466 (1996), p.~724.

\bibitem{gonthier2004regularized}
{\sc Y.~Gonthier, J.~McPhee, C.~Lange, and J.-C. Piedboeuf}, {\em A regularized
  contact model with asymmetric damping and dwell-time dependent friction},
  Multibody System Dynamics, 11 (2004), pp.~209--233.

\bibitem{grizzle2014models}
{\sc J.~W. Grizzle, C.~Chevallereau, R.~W. Sinnet, and A.~D. Ames}, {\em
  Models, feedback control, and open problems of 3d bipedal robotic walking},
  Automatica, 50 (2014), pp.~1955--1988.

\bibitem{hairer1994backward}
{\sc E.~Hairer}, {\em Backward analysis of numerical integrators and symplectic
  methods}, Annals of Numerical Mathematics, 1 (1994), pp.~107--132.

\bibitem{hairer2000long}
{\sc E.~Hairer and C.~Lubich}, {\em Long-time energy conservation of numerical
  methods for oscillatory differential equations}, SIAM journal on numerical
  analysis, 38 (2000), pp.~414--441.

\bibitem{Hairer06}
{\sc E.~Hairer, C.~Lubich, and G.~Wanner}, {\em Geometric Numerical
  Integration: Structure-Preserving Algorithms for Ordinary Differential
  Equations}, Springer, Berlin Heidelberg New York, second~ed., 2006.

\bibitem{hansen2009high}
{\sc E.~Hansen and A.~Ostermann}, {\em High order splitting methods for
  analytic semigroups exist}, BIT Numerical Mathematics, 49 (2009),
  pp.~527--542.

\bibitem{heyes1982molecular}
{\sc D.~Heyes}, {\em Molecular dynamics simulations of restricted primitive
  model 1: 1 electrolytes}, Chemical Physics, 69 (1982), pp.~155--163.

\bibitem{higham1993accuracy}
{\sc N.~J. Higham}, {\em The accuracy of floating point summation}, SIAM
  Journal on Scientific Computing, 14 (1993), pp.~783--799.

\bibitem{houndonougbo2000molecular}
{\sc Y.~A. Houndonougbo, B.~B. Laird, and B.~J. Leimkuhler}, {\em A molecular
  dynamics algorithm for mixed hard-core/continuous potentials}, Molecular
  Physics, 98 (2000), pp.~309--316.

\bibitem{jackson2007classical}
{\sc J.~D. Jackson}, {\em Classical electrodynamics}, John Wiley \& Sons, 2007.

\bibitem{JinReview}
{\sc S.~Jin}, {\em Numerical methods for hyperbolic systems with singular
  coefficients: well-balanced scheme, hamiltonian preservation, and beyond},
  Hyperbolic Problems: Theory, Numerics and Applications, 67 (2009), p.~93.

\bibitem{Jin-Liao}
{\sc S.~Jin and X.~Liao}, {\em A hamiltonian-preserving scheme for high
  frequency elastic waves in heterogeneous media}, Journal of Hyperbolic
  Differential Equations, 3 (2006), pp.~741--777.

\bibitem{JinNovak1}
{\sc S.~Jin and K.~A. Novak}, {\em A semiclassical transport model for thin
  quantum barriers}, Multiscale Modeling \& Simulation, 5 (2006),
  pp.~1063--1086.

\bibitem{JinNovak2}
\leavevmode\vrule height 2pt depth -1.6pt width 23pt, {\em A semiclassical
  transport model for two-dimensional thin quantum barriers}, Journal of
  Computational Physics, 226 (2007), pp.~1623--1644.

\bibitem{JinQiZhang}
{\sc S.~Jin, P.~Qi, and Z.~Zhang}, {\em An eulerian surface hopping method for
  the schr{\"o}dinger equation with conical crossings}, Multiscale Modeling \&
  Simulation, 9 (2011), pp.~258--281.

\bibitem{Jin-Wen1}
{\sc S.~Jin and X.~Wen}, {\em Hamiltonian-preserving schemes for the liouville
  equation with discontinuous potentials}, Communications in Mathematical
  Sciences, 3 (2005), pp.~285--315.

\bibitem{Jin-Wen3}
\leavevmode\vrule height 2pt depth -1.6pt width 23pt, {\em A
  hamiltonian-preserving scheme for the liouville equation of geometrical
  optics with partial transmissions and reflections}, SIAM Journal on Numerical
  Analysis, 44 (2006), pp.~1801--1828.

\bibitem{Jin-Wen2}
\leavevmode\vrule height 2pt depth -1.6pt width 23pt, {\em
  Hamiltonian-preserving schemes for the liouville equation of geometrical
  optics with discontinuous local wave speeds}, Journal of Computational
  Physics, 214 (2006), pp.~672--697.

\bibitem{JinWuHuang}
{\sc S.~Jin, H.~Wu, and Z.~Huang}, {\em A hybrid phase-flow method for
  hamiltonian systems with discontinuous hamiltonians}, SIAM Journal on
  Scientific Computing, 31 (2009), pp.~1303--1321.

\bibitem{Jin-Yin}
{\sc S.~Jin and D.~Yin}, {\em Computational high frequency waves through curved
  interfaces via the liouville equation and geometric theory of diffraction},
  Journal of Computational Physics, 227 (2008), pp.~6106--6139.

\bibitem{kane1999finite}
{\sc C.~Kane, E.~A. Repetto, M.~Ortiz, and J.~E. Marsden}, {\em Finite element
  analysis of nonsmooth contact}, Computer methods in applied mechanics and
  engineering, 180 (1999), pp.~1--26.

\bibitem{kaufman2012geometric}
{\sc D.~M. Kaufman and D.~K. Pai}, {\em Geometric numerical integration of
  inequality constrained, nonsmooth hamiltonian systems}, SIAM Journal on
  Scientific Computing, 34 (2012), pp.~A2670--A2703.

\bibitem{khenous2008mass}
{\sc H.~B. Khenous, P.~Laborde, and Y.~Renard}, {\em Mass redistribution method
  for finite element contact problems in elastodynamics}, European Journal of
  Mechanics-A/Solids, 27 (2008), pp.~918--932.

\bibitem{krause2012presentation}
{\sc R.~Krause and M.~Walloth}, {\em Presentation and comparison of selected
  algorithms for dynamic contact based on the newmark scheme}, Applied
  Numerical Mathematics, 62 (2012), pp.~1393--1410.

\bibitem{laursen1997design}
{\sc T.~Laursen and V.~Chawla}, {\em Design of energy conserving algorithms for
  frictionless dynamic contact problems}, International Journal for Numerical
  Methods in Engineering, 40 (1997), pp.~863--886.

\bibitem{laursen2002improved}
{\sc T.~Laursen and G.~Love}, {\em Improved implicit integrators for transient
  impact problems—geometric admissibility within the conserving framework},
  International Journal for Numerical Methods in Engineering, 53 (2002),
  pp.~245--274.

\bibitem{leimkuhler2004simulating}
{\sc B.~Leimkuhler and S.~Reich}, {\em Simulating {H}amiltonian dynamics},
  vol.~14, Cambridge University Press, 2004.

\bibitem{leine2013dynamics}
{\sc R.~I. Leine and H.~Nijmeijer}, {\em Dynamics and bifurcations of
  non-smooth mechanical systems}, vol.~18, Springer Science \& Business Media,
  2013.

\bibitem{leyendecker2012variational}
{\sc S.~Leyendecker, C.~Hartmann, and M.~Koch}, {\em Variational collision
  integrator for polymer chains}, Journal of Computational Physics, 231 (2012),
  pp.~3896--3911.

\bibitem{llibre2007regularization}
{\sc J.~Llibre, P.~R. Da~Silva, and M.~A. Teixeira}, {\em Regularization of
  discontinuous vector fields on $\mathbb{R}^3$ via singular perturbation},
  Journal of Dynamics and Differential Equations, 19 (2007), pp.~309--331.

\bibitem{llibre2008sliding}
{\sc J.~Llibre, P.~R. Da~Silva, M.~A. Teixeira, et~al.}, {\em Sliding vector
  fields via slow--fast systems}, Bulletin of the Belgian Mathematical
  Society-Simon Stevin, 15 (2008), pp.~851--869.

\bibitem{makarenkov2012dynamics}
{\sc O.~Makarenkov and J.~S. Lamb}, {\em Dynamics and bifurcations of nonsmooth
  systems: A survey}, Physica D: Nonlinear Phenomena, 241 (2012),
  pp.~1826--1844.

\bibitem{MaWe:01}
{\sc J.~E. Marsden and M.~West}, {\em Discrete mechanics and variational
  integrators}, Acta Numer., 10 (2001), pp.~357--514.

\bibitem{McQu02}
{\sc R.~McLachlan and G.~R.~W. Quispel}, {\em Splitting methods}, Acta Numer.,
  (2002), pp.~341--434.

\bibitem{mcneil1982new}
{\sc W.~J. McNeil and W.~G. Madden}, {\em A new method for the molecular
  dynamics simulation of hard core molecules}, The Journal of Chemical Physics,
  76 (1982), pp.~6221--6226.

\bibitem{moser1968lectures}
{\sc J.~Moser}, {\em Lectures on {H}amiltonian systems}, vol.~81, American
  Mathematical Soc., 1968.

\bibitem{Noether:18}
{\sc E.~Noether}, {\em Invariante variationsprobleme}, Nachr. D. K{\"o}nig.
  Gesellsch. D. Wiss. Zu G{\"o}ttingen, Math-phys. Klasse,  (1918),
  pp.~235--257.

\bibitem{nordmark2011friction}
{\sc A.~Nordmark, H.~Dankowicz, and A.~Champneys}, {\em Friction-induced
  reverse chatter in rigid-body mechanisms with impacts}, IMA Journal of
  Applied Mathematics, 76 (2011), pp.~85--119.

\bibitem{pandolfi2002time}
{\sc A.~Pandolfi, C.~Kane, J.~E. Marsden, and M.~Ortiz}, {\em Time-discretized
  variational formulation of non-smooth frictional contact}, International
  Journal for Numerical Methods in Engineering, 53 (2002), pp.~1801--1829.

\bibitem{pekarek2012variational}
{\sc D.~Pekarek and T.~D. Murphey}, {\em Variational nonsmooth mechanics via a
  projected hamilton's principle}, in 2012 American Control Conference (ACC),
  IEEE, 2012, pp.~1040--1046.

\bibitem{PerthameSimeoni}
{\sc B.~Perthame and C.~Simeoni}, {\em A kinetic scheme for the saint-venant
  system with a source term}, Calcolo, 38 (2001), pp.~201--231.

\bibitem{quispel1998volume}
{\sc G.~Quispel and C.~Dyt}, {\em Volume-preserving integrators have linear
  error growth}, Phys. Lett. A, 242 (1998), pp.~25--30.

\bibitem{calvo1994numerical}
{\sc J.~Sanz-Serna and M.~Calvo}, {\em Numerical {H}amiltonian problems},
  Chapman and Hall/CRC, 1st~ed., 1994.

\bibitem{sanz1992symplectic}
{\sc J.~M. Sanz-Serna}, {\em Symplectic integrators for {H}amiltonian problems:
  an overview}, Acta Numer., 1 (1992), pp.~243--286.

\bibitem{Sanz-Serna:08}
{\sc J.~M. Sanz-Serna}, {\em Mollified impulse methods for highly oscillatory
  differential equations}, SIAM J. Numer. Anal., 46 (2) (2008), pp.~1040--1059.

\bibitem{SoTa18}
{\sc A.~Souza and M.~Tao}, {\em Metastable transitions in inertial {L}angevin
  systems: what can be different from the overdamped case?}, European Journal
  of Applied Mathematics,  (2018).

\bibitem{stewart2000rigid}
{\sc D.~E. Stewart}, {\em Rigid-body dynamics with friction and impact}, SIAM
  review, 42 (2000), pp.~3--39.

\bibitem{Strang:68}
{\sc G.~Strang}, {\em On the construction and comparison of difference
  schemes}, SIAM J. Numer. Anal., 5 (1968), pp.~506--517.

\bibitem{stratt1981constrained}
{\sc R.~M. Stratt, S.~L. Holmgren, and D.~Chandler}, {\em Constrained impulsive
  molecular dynamics}, Molecular Physics, 42 (1981), pp.~1233--1143.

\bibitem{suh1990molecular}
{\sc S.-H. Suh, L.~Mier-y Teran, H.~White, and H.~Davis}, {\em Molecular
  dynamics study of the primitive model of 1--3 electrolyte solutions},
  Chemical physics, 142 (1990), pp.~203--211.

\bibitem{suzuki1990fractal}
{\sc M.~Suzuki}, {\em Fractal decomposition of exponential operators with
  applications to many-body theories and {M}onte {C}arlo simulations}, Phys.
  Lett. A, 146 (1990), pp.~319--323.

\bibitem{Ta16}
{\sc M.~Tao}, {\em Explicit high-order symplectic integrators for charged
  particles in general electromagnetic fields}, Journal of Computational
  Physics, 327 (2016), pp.~245--251.

\bibitem{Tao2016PRE}
\leavevmode\vrule height 2pt depth -1.6pt width 23pt, {\em Explicit symplectic
  approximation of nonseparable {H}amiltonians: algorithm and long time
  performance}, Phys. Rev. E, 94 (2016), p.~043303.

\bibitem{tao2016variational}
{\sc M.~Tao and H.~Owhadi}, {\em Variational and linearly-implicit integrators,
  with applications}, IMA Journal of Numerical Analysis, 36 (2016),
  pp.~80--107.

\bibitem{FLAVOR10}
{\sc M.~Tao, H.~Owhadi, and J.~E. Marsden}, {\em Nonintrusive and structure
  preserving multiscale integration of stiff {ODEs}, {SDEs} and {Hamiltonian}
  systems with hidden slow dynamics via flow averaging}, Multiscale Modeling \&
  Simulation: A SIAM Interdisciplinary Journal, 8 (2010), pp.~1269--1324.

\bibitem{SIM2}
\leavevmode\vrule height 2pt depth -1.6pt width 23pt, {\em From efficient
  symplectic exponentiation of matrices to symplectic integration of
  high-dimensional {H}amiltonian systems with slowly varying quadratic stiff
  potentials}, Appl. Math. Res. Express,  (2011), pp.~242--280.

\bibitem{teixeira2012regularization}
{\sc M.~A. Teixeira and P.~R. da~Silva}, {\em Regularization and singular
  perturbation techniques for non-smooth systems}, Physica D: Nonlinear
  Phenomena, 241 (2012), pp.~1948--1955.

\bibitem{Yoshida:90}
{\sc H.~Yoshida}, {\em Construction of higher order symplectic integrators},
  Phys. Lett. A, 150 (1990), pp.~262--268.

\end{thebibliography}

\end{document}